\documentclass[a4paper,12pt]{amsart}
\setlength{\oddsidemargin}{2mm}
\setlength{\evensidemargin}{2mm}
\setlength{\topmargin}{-15mm}
\setlength{\textwidth}{156mm}
\setlength{\textheight}{250mm}

\usepackage{amsmath,amssymb}
\usepackage{amsthm}
\usepackage[dvipdfmx]{graphicx}
\usepackage{mathrsfs}
\usepackage[active]{srcltx}
\usepackage{amscd}
\usepackage{cases}
\usepackage{cite}
 \newtheorem{theorem}{Theorem}[section]
 \newtheorem{proposition}[theorem]{Proposition}

 \newtheorem{corollary}[theorem]{Corollary}
\theoremstyle{definition}

 \newtheorem{example}[theorem]{Example}
\numberwithin{equation}{section}

\newcommand{\R}{\boldsymbol{R}}

\newcommand{\A}{\mathcal{A}}
\newcommand{\D}{\mathcal{D}}
\newcommand{\pmt}[1]{{\begin{pmatrix} #1  \end{pmatrix}}}

\newcommand{\hess}{\operatorname{Hess}}
\newcommand{\sgn}{\operatorname{sgn}}

\newcommand{\rank}{\operatorname{rank}}

\renewcommand{\phi}{\varphi}

\renewcommand{\Gamma}{\varGamma}
\newcommand{\ep}{\varepsilon}
\pagestyle{plain}

\begin{document}
\title{Normal form of $D_4^+$-singularities of
fronts and its applications}
\author{Kentaro Saji}
\date{\today}
\maketitle

\footnote[0]{ 2020 Mathematics Subject classification. Primary
57R45; Secondary 53A05.}
\footnote[0]{Keywords and Phrases. $D_4^+$-singularity, fronts, normal form}
\footnote[0]{
Partly supported by the
JSPS KAKENHI Grant Numbers 18K03301, 22KK0034, 22K03312.}

\begin{abstract}
We construct a form of the $D_4^+$-singularity of 
fronts in $\R^3$
which uses coordinate transformation on the source and
isometry on the target.
As an application, we calculate differential geometric 
invariants near the $D_4^+$-singularity,
and give a Gauss-Bonnet type theorem
for fronts allowing to have this singularity.
\end{abstract}

\section{Introduction}
Wave fronts and frontals are surfaces with singularities in $3$-space
which have normal directions even along singularities.
In these decades, there appeared several articles concerning on 
differential geometry of wave fronts and frontals
\cite{dz,frame,hhnsuy,hhnuy,hnuy,ms,msuy,nuy,OTflat,suyfront}.
Generic singularities of fronts in $3$-space
are 
cuspidal edges and swallowtails.
Generic singularities of $1$-parameter families of
fronts in $3$-space are several corank one bifurcations
and $D_4^\pm$-bifurcations in addition to cuspidal edges and swallowtails
\cite[Section 22.1]{agv}.
Here we call the
central singularity of $D_4^\pm$-bifurcation of fronts
the {\it $D_4^\pm$-singularities\/} for simplicity.

Fundamental differential geometric invariants of the cuspidal edge
are defined in \cite{suyfront}.
The terminology {\it normal form} of a singular point, 
that used in this paper, is a parameterization
using by coordinate transformation on the source
and isometry on the target \cite{WE}.
For the purpose of differential geometric investigation of
singularities,
it is not only convenient, but also
indispensable to study higher order invariants.
Higher order invariants of the cuspidal edges are
studied in \cite{nuy}, and in \cite{nuy},
moduli of isometric deformations of 
the cuspidal edge are determined.
In this paper, we give a normal form of the $D_4^+$-singularity.
As an application, we calculate differential geometric 
invariants near the $D_4^+$-singularity,
and give a Gauss-Bonnet type theorem
for fronts allowing to have this singularity.

The precise definition of fronts, and
the $D_4^+$-singularity is given as follows:
The unit cotangent bundle $T^*_1\R^{3}$ of $\R^{3}$ has the
canonical contact structure and can be identified with the unit
tangent bundle $T_1\R^{3}$. Let $\alpha$ denote the canonical
contact form on it. A map $i:M\to T_1\R^{3}$ is said to be 
{\it
isotropic\/} if the pull-back $i^*\alpha$ vanishes
identically, where $M$ is a $2$-manifold. 
If $i$ is an immersion, then the image $i(M)$ is a Legendre
submanifold, and the image of $\pi\circ i$ is called
the {\it wave front set},
where $\pi:T_1\R^{3}\to\R^{3}$ is the canonical projection and we
denote it by\/ $W(i)$. Moreover, $i$ is called the 
{\it Legendrian lift\/} of $W(i)$. 
With this framework, we define the notion of
fronts as follows: A map-germ $f:(\R^2,0) \to (\R^{3},0)$ is
called a {\it frontal\/} 
if there exists a unit vector field 
(called {\it unit normal of\/} $f$)
$\nu$ of $\R^{3}$ along $f$
such that
$L=(f,\nu):(\R^2,0)\to (T_1\R^{3},0)$ is
an isotropic map by an identification 
$T_1\R^3 = \R^3 \times S^2$, where $S^2$ is 
the unit sphere in $\R^3$ (cf. \cite{agv}, see also \cite{krsuy}).
A frontal $f$ is a {\it front\/} if the above $L$ can be taken as
an immersion.
A point $q\in (\R^2, 0)$ is a singular point if $f$ is not an
immersion at $q$.
A singular point $p$ of a map $f$ is called the
{\it $D_4^+$-singularity\/}
if the map-germ $f$ at $p$ is $\mathcal{A}$-equivalent to
\begin{equation}
\label{eq:normal0}
(u,v)\mapsto
(u^2-v^2,u^2+v^2,u^3+v^3)
\end{equation}
at $0=(0,0)$ (Two map-germs
$f_1,f_2:(\R^n,0)\to(\R^m,0)$ 
are $\mathcal{A}$-{\it equivalent}\/ if there exist diffeomorphisms
$S:(\R^n,0)\to(\R^n,0)$ and $T:(\R^m,0)\to(\R^m,0)$ such
that $ f_2\circ S=T\circ f_1 $.).
A singular point $p$ of a map $f$ is called the
{\it cuspidal edge\/}
if the map-germ $f$ at $p$ is $\mathcal{A}$-equivalent to
$(u,v)\mapsto(u,v^2,v^3)$ at $0$.
By a direct calculation, if the singular point $p$
of $f$ is the $D_4^+$-singularity, or the cuspidal edge, 
then $f$ at $p$ is a front.

\section{Normal form}
Let $\D\subset C^\infty(m,n)$ be an $\A$-invariant set.
An {\it $SO(n)$-normal form of\/ $\D$},
or shortly {\it normal form\/} is a pair $(\mathcal{N},\Phi)$
consists of
a set of functions (includes constants) $\mathcal{N}$ and
a surjection $\Phi:\mathcal{N}\to\widetilde \D$, where
$\widetilde \D=\D/{\rm Diff}_+(m)\times SO(n)$.
Moreover, for $\widetilde{\D^r}\subset \widetilde{\D}$
which includes $J^r(m,n)$ and
$\mathcal{N}^r\subset \mathcal{N}$, the restriction
$\Phi^r=\Phi|_{\mathcal{N}^r}:\mathcal{N}^r\to\widetilde{\D^r}$ is an injection,
then $(\mathcal{N}^r,\Phi^r)$ is called {\it $r$-degree injective\/ 
$SO(3)$-normal form}.
In this case, all elements in $\mathcal{N}^r$ are geometric invariants.

\begin{theorem}
Let\/ $f:(\R^2,0)\to(\R^3,0)$ be a\/ $C^\infty$-map germ.
We assume that there exists a coordinate system\/ $(u,v)$ such
that\/ 
$f_u=u\phi_1(u,v)$
and\/
$f_v=v\phi_2(u,v)$
for functions\/ $\phi_1,\phi_2$ satisfying\/
$\phi_1(0,0)$ and\/ $\phi_2(0,0)$ are linearly independent.
Then there exist\/
$a\in\R$,
$b_1,b_3,c_1,c_3\in C^\infty(1,1)$,
$b_2,c_2\in C^\infty(2,1)$
$s\in {\rm Diff}_+(2)$ and\/
$A\in SO(3)$ such that 
\begin{align}
\label{eq:normal}
A\circ f(s^{-1}(u,v))
=&\Big(u^2-v^2,a(u^2+v^2)+u^3b_1(u)+u^2v^2b_2(u,v)+v^3b_3(v),\\
&\hspace{50mm}u^3c_1(u)+u^2v^2c_2(u,v)+v^3c_3(v)\Big),
\nonumber
\end{align}
where\/ 
$a\geq0$, $c_1(0)\geq0$, $c_3(0)\geq0$.
\end{theorem}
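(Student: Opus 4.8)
The plan is to read off the $2$-jet and the global shape of $f$ from the two divisibility hypotheses and then to remove the freedom order by order. Writing $f=\sum_{a,b}\vect{c}_{ab}u^av^b$ with $\vect{c}_{ab}\in\R^3$, the condition that $f_u=u\phi_1$ be divisible by $u$ forces the coefficient of $v^b$ in $f_u$, namely $\vect{c}_{1,b}$, to vanish for all $b$; likewise $f_v=v\phi_2$ gives $\vect{c}_{a,1}=0$ for all $a$. Hence no monomial in which a variable appears to exactly the first power occurs, and collecting the surviving monomials yields the decomposition
\[
f(u,v)=f(u,0)+f(0,v)+u^2v^2P(u,v)
\]
for a smooth $\R^3$-valued $P$, where $f(u,0)=\tfrac12\vect{p}\,u^2+O(u^3)$ and $f(0,v)=\tfrac12\vect{q}\,v^2+O(v^3)$ with $\vect{p}=\phi_1(0,0)$, $\vect{q}=\phi_2(0,0)$ linearly independent. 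This identity is the organizing principle: the branch-curves $f(u,0)$, $f(0,v)$ will feed $b_1,c_1$ and $b_3,c_3$, the remainder $P$ feeds $b_2,c_2$, and the key is that it is preserved by the transformations used below.

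Next I would normalize the $2$-jet. Applying a rotation $A_0\in SO(3)$ I move the plane $\spann{\vect{p},\vect{q}}$ onto the $xy$-plane and rotate within it so that $\vect{p},\vect{q}$ sit symmetrically about the $y$-axis; an orientation-preserving diagonal source scaling $(u,v)\mapsto(\lambda u,\mu v)$ with $\lambda,\mu>0$ then fixes their lengths so that $\tfrac12\vect{p}=(1,a,0)$ and $\tfrac12\vect{q}=(-1,a,0)$. Here $a=\cot(\theta/2)$, where $\theta\in(0,\pi)$ is the angle between $\vect{p}$ and $\vect{q}$; as $\theta$ is a rotation invariant, so is $a$, and choosing the symmetric position with positive $y$-component (using a $\pi$-rotation about the $x$-axis if needed) gives $a\ge0$. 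After this the $2$-jet of $f$ equals $(u^2-v^2,\,a(u^2+v^2),\,0)$.

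I would then clean up the branches with a product diffeomorphism $s_1(u,v)=(\xi(u),\eta(v))$, which manifestly preserves the decomposition and reparametrizes each branch independently. Since the first coordinate of $f(u,0)$ is $u^2(1+O(u))$ with positive leading term, the substitution $\xi^{-1}(u)=u\sqrt{1+O(u)}$ makes it exactly $u^2$, and similarly the first coordinate of $f(0,v)$ becomes $-v^2$. By the decomposition and the vanishing of the $z$-components of $\vect{p},\vect{q}$, the second and third coordinates now automatically take the asserted shape $a(u^2+v^2)+u^3b_1(u)+u^2v^2b_2+v^3b_3(v)$ and $u^3c_1(u)+u^2v^2c_2+v^3c_3(v)$, while the first coordinate has become $u^2-v^2+u^2v^2p(u,v)$, with $p$ the first component of $P$.

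The one genuinely delicate point is removing the term $u^2v^2p$ from the first coordinate without destroying the shape of the other two: a Morse-type normalization of the first coordinate must be reconciled with the branch/divisibility structure carried by the second and third, and a generic diffeomorphism would reintroduce forbidden first-power monomials. I would resolve this with the one-variable change $\Theta(u,v)=\bigl(u\sqrt{1+v^2p(u,v)},\,v\bigr)$, an orientation-preserving diffeomorphism fixing both axes pointwise: by construction the first coordinate becomes exactly $u^2-v^2$, and because $\Theta$ alters only $u$, and by a factor $1+O(v^2)$, substituting it into the already-clean second and third coordinates produces only extra terms divisible by $u^2v^2$ (absorbed into $b_2,c_2$) together with unchanged pure-$u$ and pure-$v$ parts, so that no first-power monomial appears and the $2$-jet is untouched. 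Finally I would fix the remaining signs: the source map $(u,v)\mapsto(-u,-v)$ and the quarter-turn $(u,v)\mapsto(v,-u)$ paired with a $\pi$-rotation of the target both preserve the whole normal form and act on $(c_1(0),c_3(0))$ by a rotation of a quarter turn, whose orbit realizes all four sign patterns; composing with a suitable one of them, together with $a\ge0$ arranged above, gives $a\ge0$, $c_1(0)\ge0$, $c_3(0)\ge0$. The composite source map lies in $\mathrm{Diff}_+(2)$ and the composite target map in $SO(3)$, which yields the stated normal form.
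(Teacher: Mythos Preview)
Your argument is correct and follows the same route as the paper's: normalize the $2$-jet by a target rotation, then straighten the first component to $u^2-v^2$ by a source change that preserves the divisibility structure $f_u\in u\cdot C^\infty$, $f_v\in v\cdot C^\infty$, and finally fix the signs of $c_1(0),c_3(0)$ using the order-$4$ group generated by a quarter turn in the source paired with a $\pi$-rotation in the target. The only organisational difference is that you split the source change into a product map $(\xi(u),\eta(v))$ followed by $\Theta(u,v)=(u\sqrt{1+v^2p},v)$, whereas the paper performs both at once via $\tilde u=u\sqrt{k_1+ug_1(u)+v^2g_2(u,v)}$, $\tilde v=v\sqrt{k_2-vg_3(v)}$; in both cases the crucial observation is that the change fixes each coordinate axis setwise and has Jacobian of the form $\bigl(\begin{smallmatrix}*&0\\0&*\end{smallmatrix}\bigr)$ along $\{v=0\}$ (and is upper-triangular everywhere), which is exactly what forces the divisibility hypotheses to persist.

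Two small points you may wish to tighten. First, the Taylor-coefficient argument for the decomposition $f=f(u,0)+f(0,v)+u^2v^2P$ is phrased formally; for $C^\infty$ germs one should invoke Hadamard's lemma twice (from $f_u|_{u=0}=0$ and $f_v|_{v=0}=0$) rather than manipulate a power series. Second, the map $(u,v)\mapsto(-u,-v)$ acts on $(c_1(0),c_3(0))$ by a \emph{half} turn, not a quarter turn; your conclusion is unaffected since the quarter-turn map alone already generates the cyclic group of order four whose orbit meets the closed first quadrant, but the sentence as written is slightly misleading.
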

\begin{proof}
We take a coordinate system $(u,v)$ satisfying the assumption.
Since $\phi_1(0,0)$ and $\phi_2(0,0)$ are linearly independent,
by a rotation on the target, we may assume
$$
\phi_1(0,0)=2k_1(1,a,0),\quad\phi_2(0,0)=2k_2(-1,a,0)
\quad (k_1>0,\  k_2>0,\ a>0).
$$
Then $f$ is written by
$$
f(u,v)=(k_1u^2-k_2v^2+g,k_1au^2+k_2av^2+O(3),O(3)),
$$
where $g$ is a function satisfying $j^2g(0)=0$,
and
$O(n)$ stands for a function whose $n-1$-jet at $0$ vanishes.
Since $f_u|_{u=0}=0$ and $f_v|_{v=0}=0$, we see
$g$ has the form
\begin{equation}\label{eq:nouv}
g(u,v)=u^3g_1(u)+u^2v^2g_2(u,v)+v^3g_3(v).
\end{equation}
By setting 
$$
\tilde u=u\sqrt{k_1+ug_1(u)+v^2g_2(u,v)},\quad
\tilde v=v\sqrt{k_2-vg_3(v)},
$$
we see the first component of $f$ is $u^2-v^2$,
and it holds that
$\{u=0\}=\{\tilde u=0\}$,
$\{v=0\}=\{\tilde v=0\}$,
$\partial/\partial\tilde u=\partial/\partial u$ on the $v$-axis and
$\partial/\partial\tilde v=\partial/\partial v$ on the $u$-axis.
This implies that the coordinate system $(\tilde u,\tilde v)$
satisfies that
$f_{\tilde u}=\tilde u\tilde \phi_1(\tilde u,\tilde v)$
and
$f_{\tilde v}=\tilde v\tilde \phi_2(\tilde u,\tilde v)$
for some functions $\tilde \phi_1,\tilde \phi_2$.
Hence by rewriting $(\tilde u,\tilde v)$ to $(u,v)$, 
and by the same reason on \eqref{eq:nouv},
we see $f$ has the form
$$
f(u,v)=(u^2-v^2,a(u^2+v^2)+b(u,v),
u^3c_1(u)+u^2v^2 c_2(u,v)+v^3c_3(v)),
$$
where $b(u,v)=u^3b_1(u)+u^2v^2 b_2(u,v)+v^3b_3(v)$.
By taking $\pi$-rotation
$u\mapsto -u$,
$v\mapsto -v$ if necessary, one may assume $c_1(0)\geq0$. 
If $c_1(0)\geq0$ and $c_3(0)\leq0$, then
we consider the $3\pi/2$-rotation $u\mapsto -v$,
$v\mapsto u$, we see
$f(-v,u)=(-u^2+v^2,a(u^2+v^2)+b(-v,u),
-v^3c_1(-v)+u^2v^2 c_2(-v,u)+u^3c_3(u))$.
By taking the $\pi$-rotation in $\R^3$ with respect to $(0,1,0)$,
we see $(u^2-v^2,a(u^2+v^2)+b(-v,u),
-u^3c_3(u)+u^2v^2 c_2(-v,u)+v^3c_1(-v))$.
Thus we see $c_1(0)\geq0$ and $c_3(0)\geq0$.
This completes the proof.
\end{proof}
We set $f_N$ the right hand side of 
\eqref{eq:normal}, and
$
b(u,v)=u^3b_1(u)+u^2v^2b_2(u,v)+v^3b_3(v)$
$c(u,v)=u^3c_1(u)+u^2v^2c_2(u,v)+v^3c_3(v)$,
$b_{i0}=b_i(0)$,
$c_{i0}=c_i(0)$ $(i=1,3)$.
\begin{theorem}\label{thm:unique}
If\/ $a>0$, $c_{10}>0$, $c_{30}>0$, then
for any\/ $r\geq 3$, 
the map
$$
\Phi^r:(a,j^rb(0),j^rc(0))\mapsto
j^rf_N(0)
$$
is a bijection into\/ $\widetilde\D^r=\D^r/{\rm Diff}_+(2)\times SO(3)$,
where\/ $\D^r$ is the set of\/ $r$-jet who is a germ
of the\/ $D_4^+$-singularity.
\end{theorem}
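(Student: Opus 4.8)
The plan is to establish surjectivity and injectivity of $\Phi^r$ separately, the first being essentially a restatement of the preceding normal form theorem and the second carrying all of the content.

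For surjectivity I would start from an arbitrary class in $\widetilde{\D^r}$, represented by the $r$-jet of a germ $f$ of the $D_4^+$-singularity. Every such $f$ admits a coordinate system of the type required by the hypothesis of the normal form theorem: pulling back the coordinates $(u,v)$ of the model \eqref{eq:normal0} through the source diffeomorphism realizing the $\A$-equivalence produces coordinates in which $f_u=u\phi_1$ and $f_v=v\phi_2$ with $\phi_1(0),\phi_2(0)$ linearly independent, these two properties being stable under diffeomorphisms on the target since $dT$ is a linear isomorphism. Hence that theorem applies and yields $A\in SO(3)$ and $s\in{\rm Diff}_+(2)$ with $A\circ f\circ s^{-1}=f_N$ for data $(a,b,c)$ obeying the sign conditions, so that $[\,j^rf(0)\,]=\Phi^r(a,j^rb(0),j^rc(0))$ and $\Phi^r$ is onto $\widetilde{\D^r}$.

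For injectivity, suppose two normal forms $f_N,f_N'$ with $a,a'>0$ and $c_{10},c_{30},c_{10}',c_{30}'>0$ represent the same class, i.e. there are $\sigma=s^{-1}\in{\rm Diff}_+(2)$ and $A\in SO(3)$ with $A\circ f_N\circ\sigma=f_N'$ modulo order $r+1$. Since $\phi_1(0),\phi_2(0)$ are independent, the singular set of either normal form is exactly the union of the two axes $\{uv=0\}$, so $\sigma$ fixes the origin and either preserves or interchanges the two axes. I would then read off the degree-two part of the relation: as $f_N,f_N'$ begin at order two with third component of order three, the quadratic equation is that $A$ applied to $j^2f_N$ precomposed with $d\sigma_0$ equals $(u^2-v^2,\,a'(u^2+v^2),\,0)$. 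Demanding the third slot vanish forces the third row and column of $A$ to be $(0,0,\pm1)$, so $A=\mathrm{diag}(B,\pm1)$ with $B\in O(2)$; comparing the two surviving quadratic forms and using that $B$ is an isometry then forces the eigenvalues of $d\sigma_0$ to have square $1$, gives $a=a'$, and restricts $(d\sigma_0,A)$ to a finite list—the identity, the $\pi$-rotation $d\sigma_0=-\mathrm{id}$ with $A=\mathrm{id}$, and the two axis-swaps with $A=\mathrm{diag}(-1,1,-1)$. Tracking the induced action on the leading cubic coefficients, the $\pi$-rotation sends $(c_{10},c_{30})\mapsto(-c_{10},-c_{30})$ and each swap produces a pair with a negative entry, so the sign conditions on \emph{both} forms leave only $A=\mathrm{id}$, $d\sigma_0=\mathrm{id}$.

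It then remains to upgrade $d\sigma_0=\mathrm{id}$ to the statement that $\sigma$ does not alter the $r$-jets of $b$ and $c$. Writing $\sigma=(uP,vW)$ (axis-preservation forces $u\mid U$ and $v\mid V$), exact matching of the first components gives $u^2P^2-v^2W^2=u^2-v^2$, equivalently $P^2=1+v^2R$ and $W^2=1+u^2R$ for one common germ $R$. I would proceed by induction on the degree $k\le r$: assuming $\sigma=\mathrm{id}$ and $(b,c)=(b',c')$ through degree $k$, the structural shape of $f_N'$—that its second and third components carry no monomial $u^iv$ or $uv^j$, which is precisely $(f_N')_u|_{u=0}=(f_N')_v|_{v=0}=0$—should pin down the degree-$(k+1)$ part of $\sigma$ and then the degree-$(k+1)$ parts of $b',c'$. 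This induction is the main obstacle, and the delicate heart of it is to control the reparameterisation encoded by $R$: one must show that the freedom in $R$ cannot be used to modify the mixed coefficient functions $b_2,c_2$ of the $u^2v^2$-terms while keeping both components in structural form, since it is exactly these mixed terms (rather than the axis-supported terms governing $b_1,b_3,c_1,c_3$) that are insensitive to the leading reductions. Establishing that the residual freedom acts trivially on $j^rb(0)$ and $j^rc(0)$ is what makes the data a complete invariant of the class, and once it is in hand the bijectivity of $\Phi^r$ follows.
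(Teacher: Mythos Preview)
Your plan matches the paper's through the low-degree analysis: restrict $A$ and $d\sigma_0$ to a finite list from the quadratic terms, then use $c_{10},c_{30}>0$ on the cubic terms to force both to be the identity. The paper reaches the same finite list for $A$ by a geometric observation---$A$ must preserve $\nu(0)^\perp$ and the center line, hence is either the identity or the $\pi$-rotation about the $y$-axis---rather than by your algebraic reading of the degree-two equation, but the outcome is identical, and your treatment of surjectivity via Theorem~2.1 is a point the paper simply takes for granted.

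The real divergence is in the induction. You compare two \emph{a priori} different normal forms and try to show simultaneously that $j^r\sigma=\mathrm{id}$ and $(b,c)=(b',c')$; this is precisely what generates your worry about the mixed $u^2v^2$-terms, since at degree $k+1$ the first two components give only $us_1^{(k)}=vs_2^{(k)}=uvp^{(k-1)}$ and $4a\,uvp^{(k-1)}=(b'-b)^{(k+1)}$, and for $k\ge3$ the right-hand side can absorb a nonzero $p$ through $b_2'-b_2$. The paper sidesteps this by taking $f_N'=f_N$ from the outset and proving that the \emph{stabiliser} is trivial: with the same $b$ on both sides one gets $us_1^{(k)}-vs_2^{(k)}=0$ from the first component and $2a(us_1^{(k)}+vs_2^{(k)})=0$ from the second, hence $s_1^{(k)}=s_2^{(k)}=0$ in one step, and neither the third component nor the structural shape of $b,c$ is ever invoked. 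So the obstacle you flag is an artefact of your two-form framing; in the paper's self-map framing the induction is immediate. Note, however, that the paper's argument as written literally establishes only triviality of the stabiliser, not that two distinct normal forms lie in distinct orbits---bridging that last gap is exactly the residual-freedom question you identify, and the paper does not address it.
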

\begin{proof}
We set $f=f_N$, and assume $A\circ f(s^{-1}(u,v))=f(u,v)$.
We firstly restrict $A\in SO(3)$.
The plane $\nu(0)^\perp$ and the center line must not be changed.
Thus setting $\R^3$ to be the $xyz$-space,
$A$ preserves the $y$-axis including its orientation,
and
$A$ preserves the $x$-axis.
Thus $A$ is the identity or the $\pi$-rotation with respect to the
$y$-axis.
We next see the linear part of $s\in {\rm Diff}_+(2)$.
We set the linear part of $s$ as
$s^{-1}(u,v)=(a_{11}u+a_{12}v,a_{21}u+a_{22}v)$.
Since
$\pm(u^2-v^2)$ is preserved and $u^2+v^2$ is
preserved, either of the following holds
$$
(a_{11},a_{12},a_{21},a_{22})
=(1,0,0,1),\ (-1,0,0,-1),\ (0,1,-1,0),\ (0,-1,1,0).
$$
Then $A\circ f(s^{-1}(u,v))$ is
\begin{align*}
&(u^2-v^2,a (u^2+v^2)+b_{10} u^3+b_{30} v^3 ,c_{10} u^3+c_{30} v^3  ),\\
&(u^2-v^2,a (u^2+v^2)-b_{10} u^3-b_{30} v^3,-c_{10} u^3-c_{30} v^3),\\
&(u^2-v^2,a (u^2+v^2)-b_{30} u^3+b_{10} v^3,c_{30} u^3-c_{10} v^3 ),\\
&(u^2-v^2,a (u^2+v^2)+b_{30} u^3-b_{10} v^3 ,-c_{30} u^3+c_{10} v^3 ),
\end{align*}
respectively,
where $A$ is the identity in the first two cases and
$A$ is the $\pi$-rotation with respect to the $y$-axis in the
last two cases.
Since $c_{10}>0$ and $c_{30}>0$, we see 
$(a_{11},a_{12},a_{21},a_{22})
=(1,0,0,1)$, and $A$ is the identity.
Next, we set $s^{-1}(u,v)=(u+s_1(u,v),v+s_2(u,v))$, where $s_1$ and $s_2$ are
polynomials whose degrees are greater than or equal to $2$.
Looking the third order terms of the first and the second
components of $f(s^{-1}(u,v))$, we see the second order terms of
$s_1$ and $s_2$ are $0$.
We assume $s_1$ and $s_2$ are
polynomials whose degrees are greater than or equal to $k$ $(3\leq k\leq r)$.
Then looking the $(k+1)$-st order terms of the first and the second
components of $f(s^{-1}(u,v))$,
we see the $k$-th order terms of
$s_1$ and $s_2$ are $0$.
This procedure can be continued until $k$ is arbitrary given $r\geq3$,
this completes the proof.
\end{proof}

Let $f$ be a frontal, and let $\nu$ be its unit normal vector field.
We assume that $0$ rank zero singular point.
We set
$$
\lambda=\det(f_u,f_v,\nu)
$$ for some coordinate system $(u,v)$ of $(\R^2,0)$.
\begin{proposition}
The map\/ $f$ given in\/ \eqref{eq:normal} is 
a frontal. Moreover\/ $f$ is a front at\/ $0$ if and only if\/
$c_1(0)c_3(0)\ne0$.
\end{proposition}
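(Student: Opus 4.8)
The plan is to exhibit the unit normal explicitly and then read off the immersivity of the Legendrian lift from the $2$-jet of $\nu$ at the origin. First I would differentiate the right-hand side $f_N$ of \eqref{eq:normal}. Since $b$ and $c$ both have the shape $u^3(\cdots)+u^2v^2(\cdots)+v^3(\cdots)$, the derivatives $b_u,c_u$ are divisible by $u$ and $b_v,c_v$ are divisible by $v$; hence $f_u=u\phi_1$ and $f_v=v\phi_2$ with $\phi_1=(2,2a+\beta_1,\gamma_1)$ and $\phi_2=(-2,2a+\beta_2,\gamma_2)$, where $\beta_1,\gamma_1$ (resp.\ $\beta_2,\gamma_2$) vanish at $0$ and collect the contributions of $b,c$. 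At the origin $\phi_1(0)=(2,2a,0)$ and $\phi_2(0)=(-2,2a,0)$ are linearly independent because $a>0$, indeed $\phi_1(0)\times\phi_2(0)=(0,0,8a)$. Therefore $N:=\phi_1\times\phi_2$ is nonvanishing near $0$, and $\nu:=N/|N|$ is a smooth unit vector field along $f$ satisfying $\nu\cdot f_u=u(\nu\cdot\phi_1)=0$ and $\nu\cdot f_v=v(\nu\cdot\phi_2)=0$. Thus $L=(f,\nu)$ is isotropic and $f$ is a frontal.

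For the front statement I would use that $f_u(0)=f_v(0)=0$, so the origin is a rank-zero singular point and $dL|_0=(0,d\nu|_0)$. Consequently $L$ is an immersion at $0$ — that is, $f$ is a front at $0$ — exactly when $\nu_u(0),\nu_v(0)$ are linearly independent, and by openness of the immersion condition this persists on a neighborhood; conversely, if they are linearly dependent then $dL|_0$ has rank $<2$ and $f$ is not a front at $0$. Since $\nu=N/|N|$ with $N(0)=(0,0,8a)$ vertical, the correction term in $\nu_u=N_u/|N|-N\,(\partial_u|N|)/|N|^2$ is vertical at $0$; hence $\nu_u(0),\nu_v(0)$ are linearly independent if and only if the horizontal ($xy$-plane) projections of $N_u(0)$ and $N_v(0)$ are.

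It therefore remains to compute those two projections, and this short computation is the only real work. Writing $N^1=2a(\gamma_2-\gamma_1)+(\beta_1\gamma_2-\beta_2\gamma_1)$ and $N^2=-2(\gamma_1+\gamma_2)$, the products $\beta_i\gamma_j$ vanish to second order and so drop out of the first derivatives at $0$; using $\gamma_1=c_u/u$ and $\gamma_2=c_v/v$ one finds $\partial_u\gamma_1(0)=3c_{10}$, $\partial_v\gamma_2(0)=3c_{30}$, and $\partial_v\gamma_1(0)=\partial_u\gamma_2(0)=0$. Hence the horizontal parts of $N_u(0)$ and $N_v(0)$ are $(-6ac_{10},-6c_{10})$ and $(6ac_{30},-6c_{30})$, whose determinant equals $72\,a\,c_{10}c_{30}$. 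Since $a>0$, this is nonzero precisely when $c_1(0)c_3(0)\ne0$, which yields both implications. The main obstacle is purely organizational — keeping track of the cross-product derivatives — and it dissolves once one observes that every nonlinearity of $b,c$ beyond the pure cubes enters $N$ only at second order, and that the $b$-terms contribute solely to the vertical component $N^3$; neither can affect $d\nu|_0$, so the front condition is controlled entirely by $c_1(0)$ and $c_3(0)$.
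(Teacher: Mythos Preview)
Your argument is correct and follows essentially the same route as the paper: factor $f_u=u\phi_1$, $f_v=v\phi_2$, build $\nu$ from $\phi_1\times\phi_2$, and then use $df|_0=0$ to reduce the front condition to the linear independence of $\nu_u(0),\nu_v(0)$. Your reduction to the horizontal parts of $N_u(0),N_v(0)$ via the observation that the correction term is vertical is a clean way to organize the computation, and your final determinant $72\,a\,c_{10}c_{30}$ matches the paper's conclusion (indeed the paper computes $\nu_u(0),\nu_v(0)$ directly and arrives at vectors proportional to $(a,\pm1,0)$ with coefficients involving $c_1(0),c_3(0)$).
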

\begin{proof}
By the assumption,
we have $f_u=u\phi_1(u,v)$, $f_v=v\phi(u,v)$, where
\begin{align}
\phi_1&=
\dfrac{1}{12}
\Big(1,
12 a+6 u b_1+6 v^2 b_2+2 u^2 (b_1)_u+3 u v^2 (b_2)_u,\nonumber\\
&\hspace{30mm}
6 u c_1+6 v^2 c_2+u (2 u (c_1)_u+3 v^2 (c_2)_u)\Big)\label{eq:phi1}\\
\phi_2&=
\dfrac{1}{12}
\Big(-1,
12 a+6 u^2 b_2+6 v b_3+2 v^2 (b_3)_v+3 u^2 v (b_2)_v,\nonumber\\
&\hspace{30mm}
6 u^2 c_2+v (6 c_3+2 v (c_3)_v+3 u^2 (c_2)_v)\Big).\label{eq:phi2}
\end{align}
Since $a\ne0$, $\phi_1$ and $\phi_2$ are linearly independent,
setting $\tilde\nu=\phi_1\times\phi_2$ and $\nu=\tilde\nu/|\tilde\nu|$,
we see $f$ is a frontal.
Since $df_0=0$ the map $f$ is a front
if and only if $\nu_u$ and $\nu_v$ are linearly independent.
Since 
$$
(\phi_1)_u=\dfrac{1}{2}(0,b_1(0),c_1(0)),\quad
(\phi_1)_v=
(\phi_2)_u=0,\quad
(\phi_2)_v=\dfrac{1}{2}(0,b_3(0),c_3(0)),
$$
and
$$
\nu_u=\dfrac{c_3(0)}{4a}(a,-1,0),\quad
\nu_v=\dfrac{-c_1(0)}{4a}(a,1,0)
$$
hold at $(0,0)$,
we have the assertion.
\end{proof}
\begin{proposition}
Let\/ $f$ be the map\/ $f$ given in\/ \eqref{eq:normal}.
Then the Gaussian curvature\/ $K$ and the mean curvature\/ $H$
satisfy
\begin{align}
\label{eq:gauss}
K&=\dfrac{1}{uv}\bigg(\dfrac{c_1 c_3}{16 a^2}
-\dfrac{3 b_3 c_1^2+9 b_1 c_1 c_3-32 a c_3 (c_1)_u}{192 a^3}u
\nonumber\\
&\hspace{30mm}-\dfrac{9 b_3 c_1 c_3+3 b_1 c_3^2-32 a c_1 (c_3)_v}{192 a^3}v
+O(2)\bigg),\\
H&=\dfrac{1}{uv}\bigg(\dfrac{(1+a^2) c_3}{16 a^2}u
+\dfrac{(1+a^2) c_1}{16 a^2}v
+O(2)\bigg),
\end{align}
where all function values are evaluated at\/ $u=v=0$.
Here, $O(n)$ stands for a function whose\/ $(n-1)$-jet at the origin
vanishes.
\end{proposition}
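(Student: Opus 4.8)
The plan is to compute $K$ and $H$ from the standard fundamental-form expressions $K=(LN-M^2)/(EG-F^2)$ and $H=(EN-2FM+GL)/\bigl(2(EG-F^2)\bigr)$, and to extract the factor $1/(uv)$ directly from the product structure $f_u=u\phi_1$, $f_v=v\phi_2$ recorded in \eqref{eq:phi1}--\eqref{eq:phi2}. First I would write down the first fundamental form: since $f_u=u\phi_1$ and $f_v=v\phi_2$, one gets $E=u^2|\phi_1|^2$, $F=uv\langle\phi_1,\phi_2\rangle$, $G=v^2|\phi_2|^2$, hence $EG-F^2=u^2v^2\bigl(|\phi_1|^2|\phi_2|^2-\langle\phi_1,\phi_2\rangle^2\bigr)=u^2v^2|\phi_1\times\phi_2|^2$. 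Because $\nu$ is parallel to $\phi_1\times\phi_2$, one has $\langle\phi_1,\nu\rangle=\langle\phi_2,\nu\rangle=0$; differentiating $f_u=u\phi_1$, $f_v=v\phi_2$ and pairing with $\nu$ then gives $L=\langle f_{uu},\nu\rangle=u\langle(\phi_1)_u,\nu\rangle$, $N=v\langle(\phi_2)_v,\nu\rangle$, and $M=\langle f_{uv},\nu\rangle=u\langle(\phi_1)_v,\nu\rangle=v\langle(\phi_2)_u,\nu\rangle$, the last two being equal because $u(\phi_1)_v=v(\phi_2)_u$.

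With these the cancellation is transparent. Using $M^2=uv\,\langle(\phi_1)_v,\nu\rangle\langle(\phi_2)_u,\nu\rangle$ and the fact that each of $EN$, $GL$, $FM$ is visibly divisible by $uv$, dividing numerator and denominator by $uv$ yields
\[
K=\frac{1}{uv}\cdot\frac{\langle(\phi_1)_u,\nu\rangle\langle(\phi_2)_v,\nu\rangle-\langle(\phi_1)_v,\nu\rangle\langle(\phi_2)_u,\nu\rangle}{|\phi_1\times\phi_2|^2},
\]
\[
H=\frac{1}{uv}\cdot\frac{u|\phi_1|^2\langle(\phi_2)_v,\nu\rangle+v|\phi_2|^2\langle(\phi_1)_u,\nu\rangle-2u\langle\phi_1,\phi_2\rangle\langle(\phi_1)_v,\nu\rangle}{2|\phi_1\times\phi_2|^2},
\]
where in both cases the second factor is a smooth germ. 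This already explains the shape of the assertion, including the absence of a constant term inside the $H$-bracket, since every summand there carries an explicit factor $u$ or $v$.

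It then remains to Taylor expand these smooth factors. I would substitute \eqref{eq:phi1}--\eqref{eq:phi2}, the values $(\phi_1)_u(0)=\tfrac12(0,b_{10},c_{10})$, $(\phi_2)_v(0)=\tfrac12(0,b_{30},c_{30})$, $(\phi_1)_v(0)=(\phi_2)_u(0)=0$, together with $\nu(0)=(0,0,1)$ and the first-order data $\nu_u(0)$, $\nu_v(0)$ already computed above, and expand $1/|\phi_1\times\phi_2|^2$ to first order. A useful simplification is that $\langle(\phi_1)_v,\nu\rangle$ and $\langle(\phi_2)_u,\nu\rangle$ both vanish at $0$, so their product is $O(2)$ and the cross term contributes nothing to the constant or linear part of $K$; likewise the $FM$ term is $O(2)$ inside the $H$-bracket. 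Consequently the stated linear coefficients of $H$ in \eqref{eq:gauss} come from merely evaluating $|\phi_i|^2$, $\langle(\phi_j)_\ast,\nu\rangle$ and $|\phi_1\times\phi_2|^2$ at the origin, whereas for $K$ one needs the constant term \emph{and} the two first-order coefficients of $\langle(\phi_1)_u,\nu\rangle\langle(\phi_2)_v,\nu\rangle/|\phi_1\times\phi_2|^2$.

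The main obstacle is exactly this last expansion for $K$: obtaining the linear-in-$u$ and linear-in-$v$ corrections requires the order-one Taylor coefficients of $\nu$ (hence of the normalized cross product $\phi_1\times\phi_2$), of $(\phi_1)_u$ and $(\phi_2)_v$, and of $1/|\phi_1\times\phi_2|^2$, followed by collecting the resulting bilinear contributions; the appearance of $b_1,b_3,(c_1)_u,(c_3)_v$ in the $u$- and $v$-coefficients of \eqref{eq:gauss} reflects precisely these first-order terms. This step is routine but bookkeeping-heavy, and I would organize it by first computing $\nu$ and $|\phi_1\times\phi_2|^2$ to order one, thereby reducing the remaining work to multiplying out truncated polynomials.
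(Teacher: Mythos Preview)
Your approach is correct and essentially identical to the paper's: both factor $E,F,G,L,M,N$ through $\phi_1,\phi_2$, extract $1/(uv)$, and then Taylor-expand the remaining smooth quotient. The only cosmetic difference is that the paper writes the second fundamental form via determinants $\det(\,\cdot\,,\phi_1,\phi_2)$ rather than inner products with $\nu$, and records explicitly that $(\phi_1)_v=v\phi_3$, $(\phi_2)_u=u\phi_3$ for a smooth $\phi_3$ (so that $M$ itself, not just $M^2$, is divisible by $uv$); this is exactly the structure underlying your observation that the $M^2$- and $FM$-terms are $O(2)$ in the bracket.
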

\begin{proof}
Setting 
$
\tilde E=\phi_1\cdot\phi_1,
\tilde F=\phi_1\cdot\phi_2,
\tilde G=\phi_2\cdot\phi_2
$,
we see $E=u^2\tilde E,
F=uv\tilde F,
G=v^2\tilde G$.
Since $\phi_1$ and $\phi_2$ are linearly independent,
we see $\tilde E\tilde G-\tilde F^2\ne0$.
Moreover,
$(\phi_1)_v=v\phi_3$ and
$(\phi_2)_u=u\phi_3$ 
hold, where
\begin{equation}\label{eq:phi3}
\phi_3=
\dfrac{1}{4}
\Big(0,
4 b_2+2 v (b_2)_v+2 u (b_2)_u+u v (b_2)_{uv},
4 c_2+2 v (c_2)_v+2 u (c_2)_u+u v (c_2)_{uv}\Big).
\end{equation}
Setting
$\tilde L=\det((\phi_1)_u,\phi_1,\phi_2)$,
$\tilde M=\det(\phi_3,\phi_1,\phi_2)$,
$\tilde N=\det((\phi_2)_v,\phi_1,\phi_2)$,
we see
$L=u\tilde L/\delta$,
$M=uv\tilde M/\delta$,
$N=v\tilde N/\delta$,
where $\delta=|\phi_1\times\phi_2|$.
Then
\begin{align}
EG-F^2&=u^2v^2(\tilde E\tilde G-\tilde F^2),\\
LN-M^2&=uv(\tilde L\tilde N-uv\tilde M^2)/\delta^2,\\
EN-2FM+GL&=uv(\tilde E\tilde N-2uv\tilde F\tilde M+v\tilde G\tilde L)/\delta
\end{align}
and
\begin{align}
K&=
\dfrac{\tilde L\tilde N-uv\tilde M^2}
{uv\delta^2(\tilde E\tilde G-\tilde F^2)},\\
H&=
\dfrac{u\tilde E\tilde N-2uv\tilde F\tilde M+v\tilde G\tilde L}
{2uv\delta(\tilde E\tilde G-\tilde F^2)}
\end{align}
hold.
By
\begin{align}
\tilde E\tilde G-\tilde F^2&=
4 a^2+2 a b_1u+2 a b_3v+O(2),\\
\tilde L\tilde N-uv\tilde M^2&=
a^2 c_1 c_3
+\dfrac{1}{12} a (-3 b_3 c_1^2+3 b_1 c_1 c_3+32 a c_3 (c_1)_u)u\nonumber\\
&\hspace{10mm}
+\dfrac{1}{12} a (-3 b_1 c_3^2+3 b_3 c_1 c_3+32 a c_1 (c_3)_v)v+O(2),\\
u\tilde E\tilde N-2uv\tilde F\tilde M+v\tilde G\tilde L&=
a (1+a^2) c_3u+a (1+a^2) c_1v+O(2),\\
\delta&=
2 a+b_1u/2+b_3v/2+O(2),
\end{align}
we have the assertion.
\end{proof}
Let $f$ be the map $f$ given in \eqref{eq:normal}.
Then the set of singular points $S(f)$ satisfies
$S(f)=\{uv=0\}$.
Let us set 
\begin{equation}\label{eq:singpara}
\gamma_1(u)=(u,0),\quad \gamma_2(v)=(0,v)\quad
\text{and}\quad
\hat\gamma_i=f\circ\gamma_i\quad (i=1,2).
\end{equation}
It is known that $f$ at $(u,0)$ or at $(0,v)$ is
cuspidal edge if $u\ne0$ or $v\ne0$.
On cuspidal edge, several geometric invariants are known.
Here we calculate the singular curvature, the normal curvature,
the cuspidal curvature and the cuspidal torsion.
See 
\cite{ms,msuy,suyfront} for definitions and fundamental properties.
\begin{proposition}\label{prop:evalinv}
Under the above setting,
we have
\begin{align}
\label{eq:kappas}
\kappa_s=&
\dfrac{1}{u}
\left(\dfrac{b_1}{2 (1+a^2)^{3/2}}\right.\\
&\hspace{10mm}
\left.+
\dfrac{
-18 a^2 b_1^2+3 c_1^2-3 a^4 c_1^2+32 a (b_1)_u+32 a^3 (b_1)_u}
{24 a (1+a^2)^{5/2}}u+O(2)\right),\label{eq:ksassymp}\\
\kappa_\nu=&
\dfrac{1}{u}
\left(
1+\dfrac{a b_1}{2+2 a^2}u+O(2)\right),\label{eq:knassymp}\\
\kappa_t=&
\dfrac{1}{u}
\left(
\dfrac{(-1+a^2) c_3}{4a (1+a^2)}
+O(1)\right),\label{eq:ktassymp}\\
\kappa_c=&
\dfrac{b_1}{(1+a^2)^{5/4}}+O(1).\label{eq:kcassymp}
\end{align}
\end{proposition}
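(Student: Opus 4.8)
The four quantities $\kappa_s,\kappa_\nu,\kappa_t,\kappa_c$ are the standard invariants of a cuspidal edge, so the plan is to recall their coordinate-free defining formulas from \cite{ms,msuy,suyfront} and to evaluate them along the singular curve $\gamma_1$, expanding the result in powers of $u$. Since $f$ has a cuspidal edge at each point $(u,0)$ with $u\ne0$ and the singular set is $\{uv=0\}$, the relevant singular curve is $\{v=0\}$, parametrized by $\gamma_1(u)=(u,0)$, and the null direction along it is $\partial/\partial v$, because $f_v=v\phi_2$ vanishes identically on $\{v=0\}$. Each invariant is a ratio of determinants and norms built from the image $\hat\gamma_1=f\circ\gamma_1$, its derivatives, the unit normal $\nu$, and the $v$-derivatives of $f$ describing the cuspidal cross-section.

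First I would assemble the data restricted to $\{v=0\}$. From \eqref{eq:normal} one has $\hat\gamma_1(u)=(u^2,au^2+u^3b_1(u),u^3c_1(u))$, from which $\hat\gamma_1'$ and $\hat\gamma_1''$ are read off directly; note that $\hat\gamma_1'(u)=u\,\phi_1(u,0)$ has a simple zero at $u=0$ since the $D_4^+$-point is a rank-zero singularity. The null-direction derivatives are $f_{vv}|_{v=0}=\phi_2(u,0)$ and $f_{vvv}|_{v=0}=2(\phi_2)_v(u,0)$, obtained from \eqref{eq:phi2}, while the derivatives along $\gamma_1$ of the null-direction data involve $\phi_3$ of \eqref{eq:phi3}. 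The unit normal is $\nu=(\phi_1\times\phi_2)/|\phi_1\times\phi_2|$ with $\phi_1,\phi_2$ from \eqref{eq:phi1}--\eqref{eq:phi2}; along $\{v=0\}$ it satisfies $\nu(u,0)=(0,0,1)+u\,\nu_u(0,0)+O(u^2)$, and its first jet is computed exactly as in the proof of the front criterion above.

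The decisive structural point is that $|\hat\gamma_1'|$ vanishes to first order at $u=0$. Hence every invariant normalized by a positive power of $|\hat\gamma_1'|$, namely $\kappa_s$, $\kappa_\nu$ and $\kappa_t$, acquires a simple pole in $u$ as the central $D_4^+$-point is approached, while in $\kappa_c$ the powers of $|\hat\gamma_1'|$ cancel and the value stays finite. The plan is therefore to substitute the jets above into each defining formula, factor out the common power of $u$, and Taylor-expand the remaining smooth factor—to constant and linear order in $u$ for $\kappa_s$ and $\kappa_\nu$, and to leading order for $\kappa_t$ and $\kappa_c$—so as to obtain \eqref{eq:ksassymp}--\eqref{eq:kcassymp}.

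The one genuine difficulty is the accuracy demanded of the expansions. For $\kappa_s$ the controlling determinant $\det(\hat\gamma_1',\hat\gamma_1'',\nu)$ vanishes to order $u^2$, so the $O(1)$ and $O(u)$ parts cancel identically and one must carry $\hat\gamma_1'$, $\hat\gamma_1''$ and $\nu$ one order further than naively expected and track the cancellations. The cuspidal torsion is the most delicate: it combines the twisting of $\nu$ along $\gamma_1$ with the third-order null data $f_{vvv}$, and it is through the latter—whose value at the central point involves $b_3(0)$ and $c_3(0)$—that the coefficient $c_3(0)$ enters the leading term of \eqref{eq:ktassymp}. No conceptual obstruction arises beyond this bookkeeping; the sign normalizations $a\ge0$, $c_1(0)\ge0$, $c_3(0)\ge0$ of the normal form, together with the orientation of the null field $\partial/\partial v$, help fix all remaining signs.
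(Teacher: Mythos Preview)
Your plan is essentially the paper's own proof: take $\partial_v$ as the null direction along $\gamma_1$, write out $\hat\gamma_1$ and $\nu$ on $\{v=0\}$, substitute into the defining formulas of \cite{ms,msuy,suyfront}, factor out the forced powers of $u$ coming from $\hat\gamma_1'(u)=u\,\phi_1(u,0)$, and Taylor-expand the smooth remainders to the required order. The one concrete discrepancy is in $\kappa_t$: the paper evaluates the Martins--Saji expression built from $f_u$, $f_{vv}$, $f_{uvv}$, $f_{uu}$ (not $f_{vvv}$), so your explanation that $c_3(0)$ enters through the third-order null data $f_{vvv}$ does not match the mechanism actually used in the paper's computation.
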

\begin{proof}
On the $u$-axis, $f_v=0$. Thus we can take $\partial_v$ as a
null vector field. Since $(uv)_v=u$, 
the sign $\sigma$ is $\sigma=\sgn(u)$.
We have
\begin{equation}\label{eq:g1}
\hat\gamma_1(u)=(u^2/2,a u^2/2+u^3 b_1(u)/6,u^3 c_1(u)/6)
\end{equation}
and
$$
\nu(u,0)=
\pmt{0\\0\\1}
-\dfrac{c_1(0)}{4}\pmt{1\\1/a\\0}u
+
\pmt{(3 b_1 c_1+12 a c_2-16 a (c_1)_u)/(48 a)\\
(3 b_1 c_1-12 a c_2-16 a (c_1)_u)/(48 a^2)\\
-(1+a^2) c_1^2/(32 a^2)}u^2+O(3).
$$
Then we see
\begin{align}
\det(\hat\gamma_1'(u),\hat\gamma_1''(u),\nu(\gamma_1(u)))
=&
u^2\left(
\dfrac{b_1}{2}+\left(\dfrac{-(-1+a^2) c_1^2}{8 a}
+\dfrac{4 (b_1)_u}{3}\right)u+O(2)\right),\\
\hat\gamma_1''(u)\cdot\nu(\gamma_1(u))
=&
u\left(1+a^2+\dfrac{3a b_1}{2} u
+O(2)\right),\\
|\hat\gamma_1'(u)|
=&
|u|\sqrt{1+a^2+a b_1u
+O(2)},
\end{align}
and we have \eqref{eq:ksassymp} and \eqref{eq:knassymp}.
Moreover, by
\begin{align}
&  f_u\cdot f_u \det(f_u, f_{vv}, f_{uvv}) 
- f_u\cdot f_{vv} \det(f_u, f_{vv}, f_{uu})\Big|_{v=0} \\
=&u^3\bigg(a (1-a^2) c_1
+O(1)\bigg)\nonumber\\
 &f_u \cdot f_u (f_u\times f_{vv}\cdot f_u\times f_{vv})\Big|_{v=0} 
\nonumber\\
=&
u^4(4 a^2 (1+a^2)
+O(1)),
\end{align}
and
\begin{equation}
\det(\hat\gamma_1'',\hat\gamma_1''',\nu(\gamma_1))(0,0)
=b_1,\quad
|\hat\gamma_1''(0)|=\sqrt{1+a^2},
\end{equation}
we have \eqref{eq:ktassymp} and \eqref{eq:kcassymp}.
\end{proof}
\begin{corollary}
Under the same assumption of Proposition\/ {\rm \ref{prop:evalinv}},
$\kappa_s$ is bounded if and only if\/ $b_1=0$,
and\/ $\kappa_s$ is\/ $0$ if and only if\/ 
$b_1=3 (-1 + a^2) c_1^2 - 32 a (b_1)_u=0$.
On the other hand, $\kappa_\nu$ is always unbounded.
Furthermore, $\kappa_t$ is bounded if\/ $a=\pm1$ or\/ $c_3=0$,
and\/
$\kappa_c$ is\/ $0$ if and only if\/ $b_1=0$.
\end{corollary}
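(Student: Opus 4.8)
The plan is to read off each claim in the corollary directly from the asymptotic expansions established in Proposition~\ref{prop:evalinv}, since the corollary is precisely a collection of boundedness and vanishing conditions for the four invariants $\kappa_s,\kappa_\nu,\kappa_t,\kappa_c$ along the singular curve $\hat\gamma_1$. Each invariant is expressed as $\tfrac1u$ times a power series in $u$ (evaluated at $v=0$), so the analysis reduces to examining the leading coefficients of these series. The key observation throughout is that an expression of the form $\tfrac1u\big(A_0+A_1 u+O(2)\big)$ is bounded near $u=0$ if and only if the constant term $A_0$ vanishes, and in that case the invariant extends to $0$ at the origin precisely when the next coefficient $A_1$ also vanishes.

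First I would treat $\kappa_s$. From \eqref{eq:ksassymp} we have $\kappa_s=\tfrac1u\big(\tfrac{b_1}{2(1+a^2)^{3/2}}+A_1 u+O(2)\big)$ with leading coefficient proportional to $b_1$; since $1+a^2>0$, boundedness of $\kappa_s$ is equivalent to $b_1=0$. For the vanishing statement I would set $b_1=0$ and read off the coefficient $A_1$ from \eqref{eq:ksassymp}: substituting $b_1=0$ into the numerator $-18a^2b_1^2+3c_1^2-3a^4c_1^2+32a(b_1)_u+32a^3(b_1)_u$ gives $3(1-a^4)c_1^2+32a(1+a^2)(b_1)_u$, and after factoring out $(1+a^2)$ against the denominator $24a(1+a^2)^{5/2}$ this is proportional to $3(-1+a^2)c_1^2-32a(b_1)_u$; hence $\kappa_s\equiv0$ iff $b_1=0$ and $3(-1+a^2)c_1^2-32a(b_1)_u=0$, matching the stated condition. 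Next, $\kappa_\nu=\tfrac1u(1+O(1))$ by \eqref{eq:knassymp}, whose leading coefficient is the nonzero constant $1$, so $\kappa_\nu$ is always unbounded.

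For $\kappa_t$, equation \eqref{eq:ktassymp} gives leading term $\tfrac1u\cdot\tfrac{(-1+a^2)c_3}{4a(1+a^2)}$, so $\kappa_t$ is bounded as soon as this constant vanishes, which happens when $a^2=1$ (that is $a=\pm1$) or when $c_3=0$; this yields the stated sufficient condition. Finally $\kappa_c=\tfrac{b_1}{(1+a^2)^{5/4}}+O(1)$ by \eqref{eq:kcassymp}, and since the prefactor is nonzero, $\kappa_c=0$ iff $b_1=0$. \textbf{The main point requiring care} is the $\kappa_s$ vanishing computation: one must correctly isolate the $A_1$-coefficient after imposing $b_1=0$ and verify the algebraic simplification factoring $(1+a^2)$ and sign-tracking $(-1+a^2)$, whereas the remaining three claims follow immediately by inspecting leading coefficients. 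I would also remark that the corollary's $\kappa_t$ statement is phrased only as a sufficient condition (``if''), consistent with the fact that $a=\pm1$ or $c_3=0$ forces the displayed leading coefficient to zero.
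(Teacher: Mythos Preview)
Your proposal is correct and matches the paper's approach: the paper states this corollary without proof, treating it as an immediate consequence of the expansions \eqref{eq:kappas}--\eqref{eq:kcassymp} in Proposition~\ref{prop:evalinv}, and your argument supplies exactly those routine verifications of leading and next-to-leading coefficients. The only minor wording issue is that you write ``$\kappa_s\equiv 0$'' while actually (and correctly) arguing that the limiting value at $u=0$ vanishes; the algebraic simplification you flag for the $\kappa_s$ vanishing condition is handled correctly.
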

It should be remarked that the cases
$a=\pm1$ correspond to the singularities of parallel surface
of a right-angled umbilic point. See \cite[p391]{fhpara}.
\begin{example}
Since $b_1$ and $b_3$ control the signs of $\kappa_s$, 
we can obtain different curved cuspidal edges
at the top and bottom of a $D_4^+$-singularity
by interchanging this sign.
If the singular curvature is positive, 
then the cuspidal edge is curved 
into a rounded shape, and
negative, it is curved into a warped shape
\cite[Theorem 1.17]{suyfront}.
Let us set
\begin{align}
\label{eq:posks}
f_1&=
(u^2 - v^2, u^2 + v^2+u^3 + v^3, u^3 + v^3),\\
\label{eq:negks}
f_2&=
(u^2 - v^2, u^2 + v^2+u^3 - v^3, u^3 + v^3).
\end{align}
Then by \eqref{eq:kappas}, we see two cuspidal edges 
at the top of $f_1$ have positive $\kappa_s$,
and two cuspidal edges 
at the bottom of $f_1$ have negative $\kappa_s$.
See Figure \ref{fig:posnegks} left.
On the other hand,
by \eqref{eq:kappas}, we see two cuspidal edges 
at the top of $f_2$ have positive and negative $\kappa_s$,
and two cuspidal edges 
at the bottom of $f_2$ have positive and negative $\kappa_s$.
See Figure \ref{fig:posnegks}, right.
\begin{figure}[ht]
\begin{center}
\includegraphics[width=.3\linewidth]{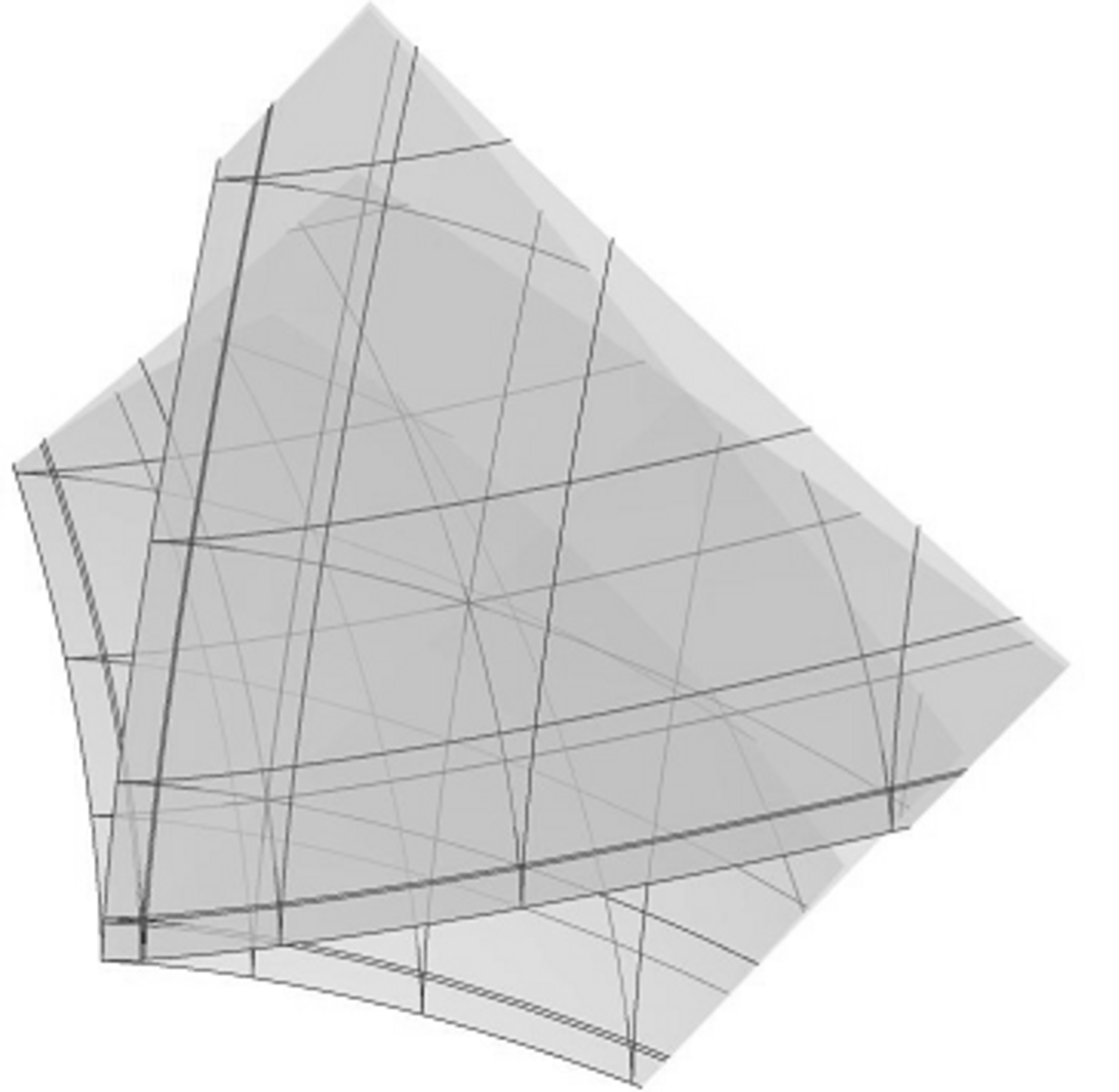}
\hspace{10mm}
\includegraphics[width=.3\linewidth]{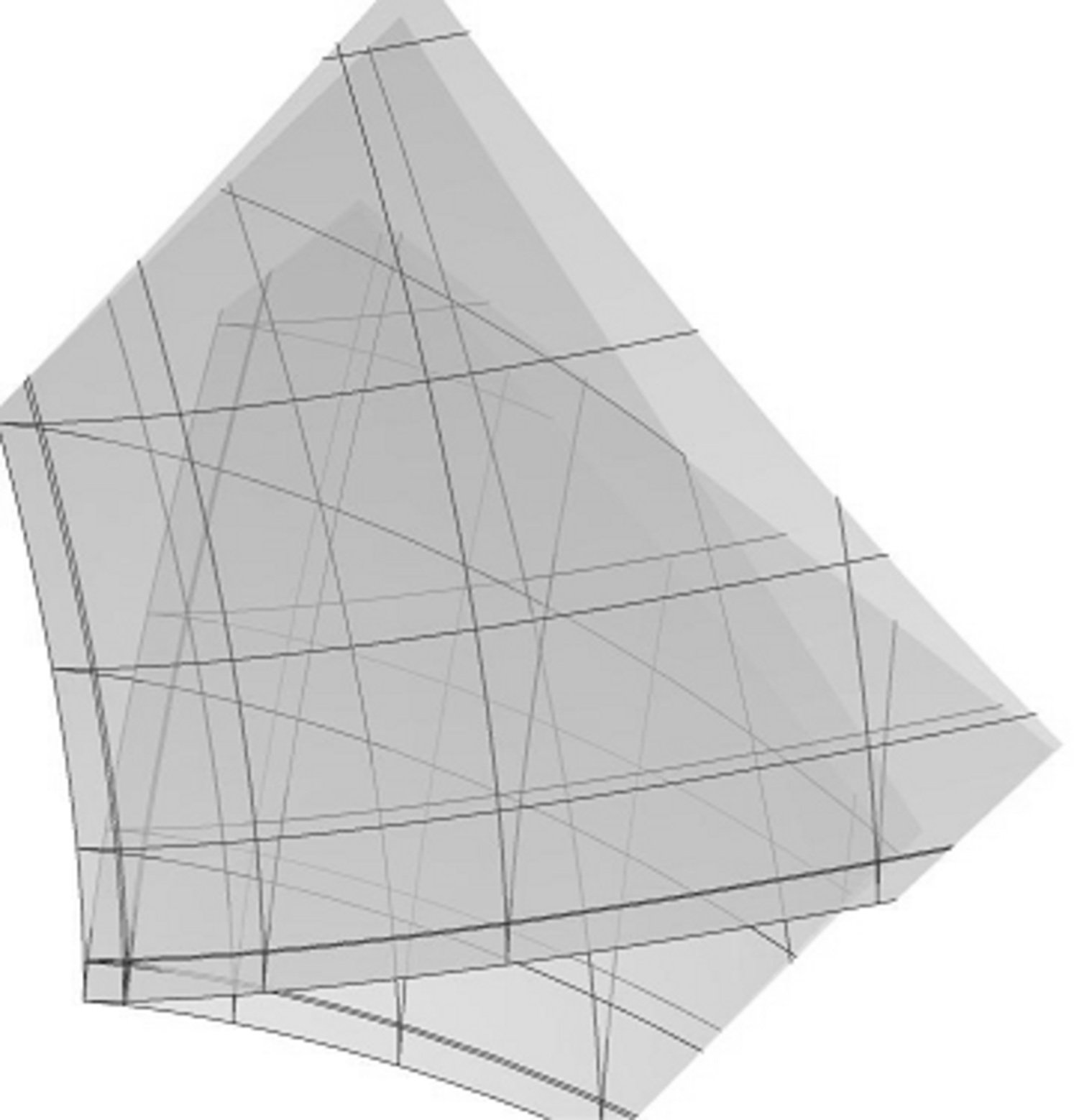}
\end{center}
\caption{$D_4^+$-singularities of \eqref{eq:posks} and \eqref{eq:negks}}
\label{fig:posnegks}
\end{figure}
\end{example}
On the other hand,
we calculate the cuspidal curvature for
plane curves.
Let $f$ be the map $f$ given in \eqref{eq:normal},
and
let $\gamma_i$, $\hat\gamma_i$ $(i=1,2)$ be as in \eqref{eq:singpara}
Let $\nu(0)$ (respectively, $c$) be the unit normal vector
(respectively, unit generator vector of the center line),
and $\pi_{\nu(0)}$ (respectively, $\pi_c$) be orthonormal projections
with respect to $\nu(0)$ (respectively, $c$).
We set 
$\hat\gamma_i^n=\pi_{\nu(0)}\circ f\circ \gamma_i$
and
$\hat\gamma_i^c=\pi_{c}\circ f\circ \gamma_i$.
Let $\Omega(\hat\gamma_i^n)$ 
(respectively, $\Omega(\hat\gamma_i^c)$)
be the cuspidal curvature of $\hat\gamma_i^n$
(respectively, $\hat\gamma_i^c$).
By \eqref{eq:g1},
we have the following corollary.
\begin{corollary}\label{cor:inv}
The angle\/ $\theta$ between\/ 
$\hat\gamma_1''(0)$ and\/
$\hat\gamma_2''(0)$ satisfies 
$$
\cos\theta=\dfrac{-1+a^2}{1+a^2}.
$$
Moreover, it holds that
\begin{align*}
\Omega(\hat\gamma_1^n)=\dfrac{b_1(0)}{1+a^2},\quad
\Omega(\hat\gamma_1^c)=\dfrac{b_3(0)}{1+a^2}\\
\Omega(\hat\gamma_2^n)=\dfrac{c_1(0)}{1+a^2},\quad
\Omega(\hat\gamma_2^c)=\dfrac{c_3(0)}{1+a^2}.
\end{align*}
\end{corollary}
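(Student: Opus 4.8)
The plan is to extract both conclusions from the $3$-jets at the origin of the two singular image curves, which are essentially already recorded: \eqref{eq:g1} gives $\hat\gamma_1(u)=(u^2/2,au^2/2+u^3b_1(u)/6,u^3c_1(u)/6)$, and the branch-swapping symmetry exploited in the proof of Theorem~\ref{thm:unique} yields the analogous $\hat\gamma_2(v)=(-v^2/2,av^2/2+v^3b_3(v)/6,v^3c_3(v)/6)$. Before computing anything I would fix the two intrinsic directions at $0$: the unit normal is $\nu(0)=(0,0,1)$, already visible in the expansion of $\nu(u,0)$ in the proof of Proposition~\ref{prop:evalinv}, and the center line is spanned by $c=(0,1,0)$, the bisector of the two cuspidal tangents and the axis left invariant in the proof of Theorem~\ref{thm:unique}.

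For the angle I would simply differentiate twice: the quadratic terms give $\hat\gamma_1''(0)=(1,a,0)$ and $\hat\gamma_2''(0)=(-1,a,0)$, so
$$
\cos\theta=\frac{\inner{\hat\gamma_1''(0)}{\hat\gamma_2''(0)}}{|\hat\gamma_1''(0)|\,|\hat\gamma_2''(0)|}=\frac{-1+a^2}{1+a^2}.
$$
This step is insensitive to any common rescaling of the cusp parameter, since such a factor cancels in the quotient.

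For the four cuspidal curvatures I would pass to the planar cusps obtained by orthogonally projecting $\hat\gamma_i$ onto the planes $\nu(0)^\perp$ and $c^\perp$, namely $\hat\gamma_i^n$ and $\hat\gamma_i^c$. The key observation is that, for a unit vector $e$, the two-dimensional determinant governing the planar cuspidal curvature of the projection onto $e^\perp$ equals the triple product $\det(\hat\gamma_i''(0),\hat\gamma_i'''(0),e)$. Since $\hat\gamma_1'''(0)=(0,b_1,c_1)$ and $\hat\gamma_2'''(0)=(0,b_3,c_3)$, the choice $e=\nu(0)$ isolates the $b$-coefficient and the choice $e=c$ isolates the $c$-coefficient, while the relevant length scale is $|\hat\gamma_i''(0)|=\sqrt{1+a^2}$, whose appropriate power yields the factor $1+a^2$ in each case. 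Applying the cuspidal-curvature formula for planar $3/2$-cusps (see \cite{ms,msuy,suyfront}) to each of the four projected curves then produces the displayed values.

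The computations are all routine differentiations and $2\times2$ or $3\times3$ determinants. The only genuine points requiring care are the correct identification of the two invariant directions $\nu(0)$ and $c$, relative to which every quantity in the statement is defined, and the verification that each projected curve is a nondegenerate $3/2$-cusp, i.e.\ that the relevant triple product does not vanish; this is where the nondegeneracy hypotheses $a>0$ and $c_{10},c_{30}>0$ (together with nonvanishing of the appropriate $b$-coefficient) enter. Keeping track of which single coefficient survives each of the two projections is the main bookkeeping obstacle, but once the projections are set up the four cuspidal curvatures follow in parallel.
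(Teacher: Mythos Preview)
Your approach is the same as the paper's: both reduce the corollary to a direct computation from the explicit expression \eqref{eq:g1} for $\hat\gamma_1$ (and its analogue for $\hat\gamma_2$), reading off $\hat\gamma_i''(0)$ and $\hat\gamma_i'''(0)$ and plugging them into the angle formula and the planar cuspidal-curvature formula. The paper's own proof says no more than ``by \eqref{eq:g1} and a direct calculation''; you have simply unpacked that calculation.

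One small bookkeeping slip to watch: the length that enters the planar cuspidal curvature is $|\pi_e\hat\gamma_i''(0)|$, the norm of the \emph{projected} second derivative, not $|\hat\gamma_i''(0)|$ itself. For $e=\nu(0)=(0,0,1)$ these coincide (since $\hat\gamma_i''(0)$ already lies in $\nu(0)^\perp$), but for $e=c=(0,1,0)$ they do not, so your blanket claim that ``the relevant length scale is $\sqrt{1+a^2}$'' fails for the $c$-projection. Just do the four projections explicitly rather than arguing by a uniform scaling; the determinant identity you state is correct, so once the lengths are handled separately the values drop out.
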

\begin{proof}
By \eqref{eq:g1} and a direct calculation, we have
the assertion for $\gamma_1$.
By the same calculation, one can obtain
the assertion for $\gamma_2$.
\end{proof}
Comparing \eqref{eq:normal}, Corollary \ref{cor:inv}
can be used determining $a,b_i(0),c_i(0)$ $(i=1,3)$ for
a given $D_4^+$-singularity which are all third order
data for $D_4^+$-singularities.
\section{Symmetry}
In this section,
following the method given in \cite{sym},
we discuss symmetries of $D_4^+$-singul{-}arities.
See \cite{sym} for the symmetries of cross caps.
Let $f\in C^\infty(2,3)$ be a $D_4^+$-singularity,
and $\nu$ the unit normal vector field.
We call the plane $\nu^\perp$ through the origin the
{\it tangent plane\/} (see Figure \ref{fig:planes}, left).
By \eqref{eq:g1}, the vectors $\hat\gamma_i''(0)$ $(i=1,2)$ are
linearly independent.
The lines generated by them are called the {\it singular line}.
Moreover, the bisector line of
$\hat\gamma_i''(0)$ $(i=1,2)$ is called the {\it center line\/}
(see Figure \ref{fig:centerline}),
and the normal plane of the center line through the origin the
is called the {\it normal plane\/} (see Figure \ref{fig:planes}, center).
The plane generated by the $\nu(0)$ and center line
is called the {\it principal plane\/} (see Figure \ref{fig:planes}, right).
\begin{figure}[ht]
\begin{center}
\includegraphics[width=.34\linewidth]{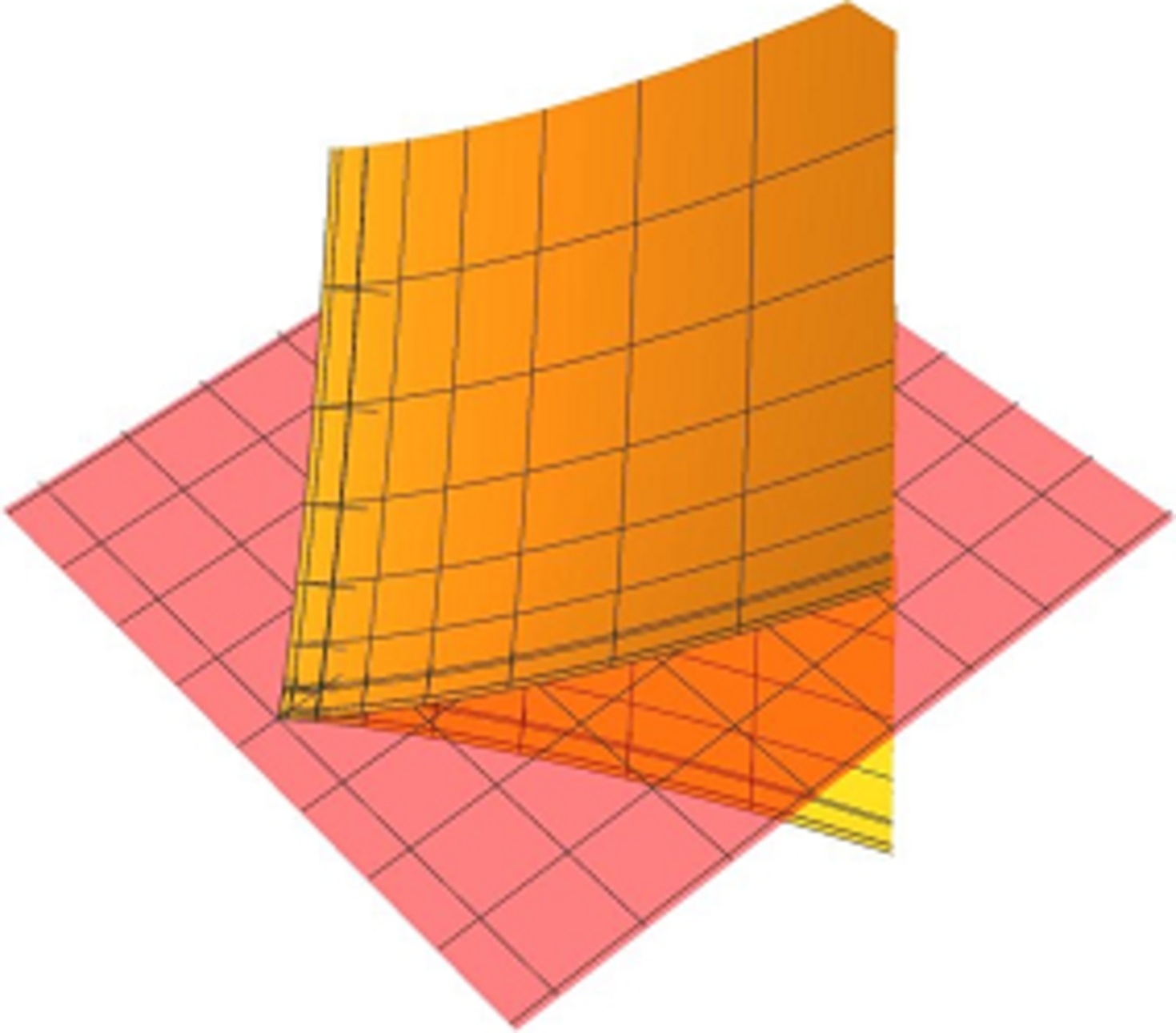}
\hspace{0mm}
\includegraphics[width=.3\linewidth]{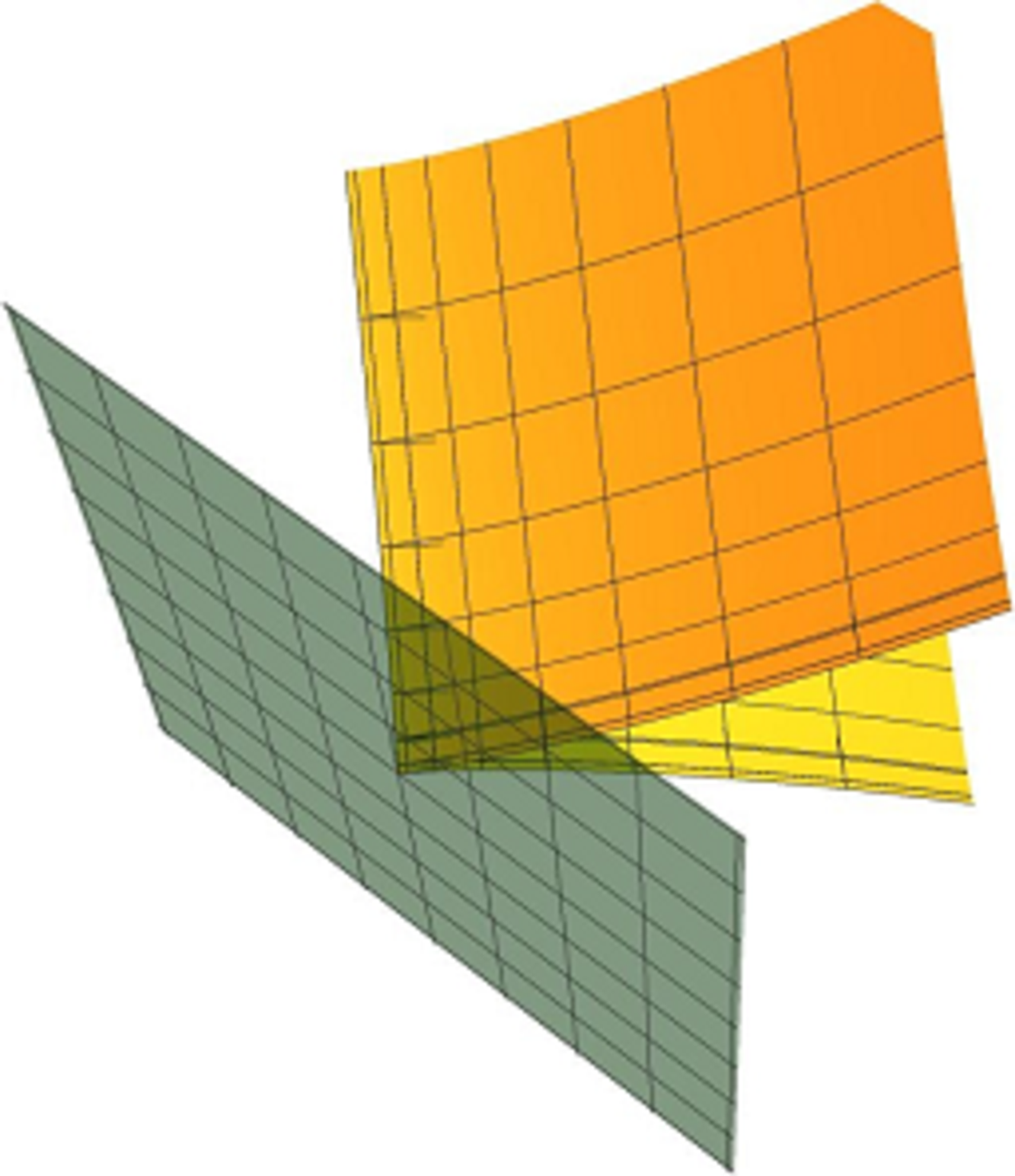}
\hspace{0mm}
\includegraphics[width=.26\linewidth]{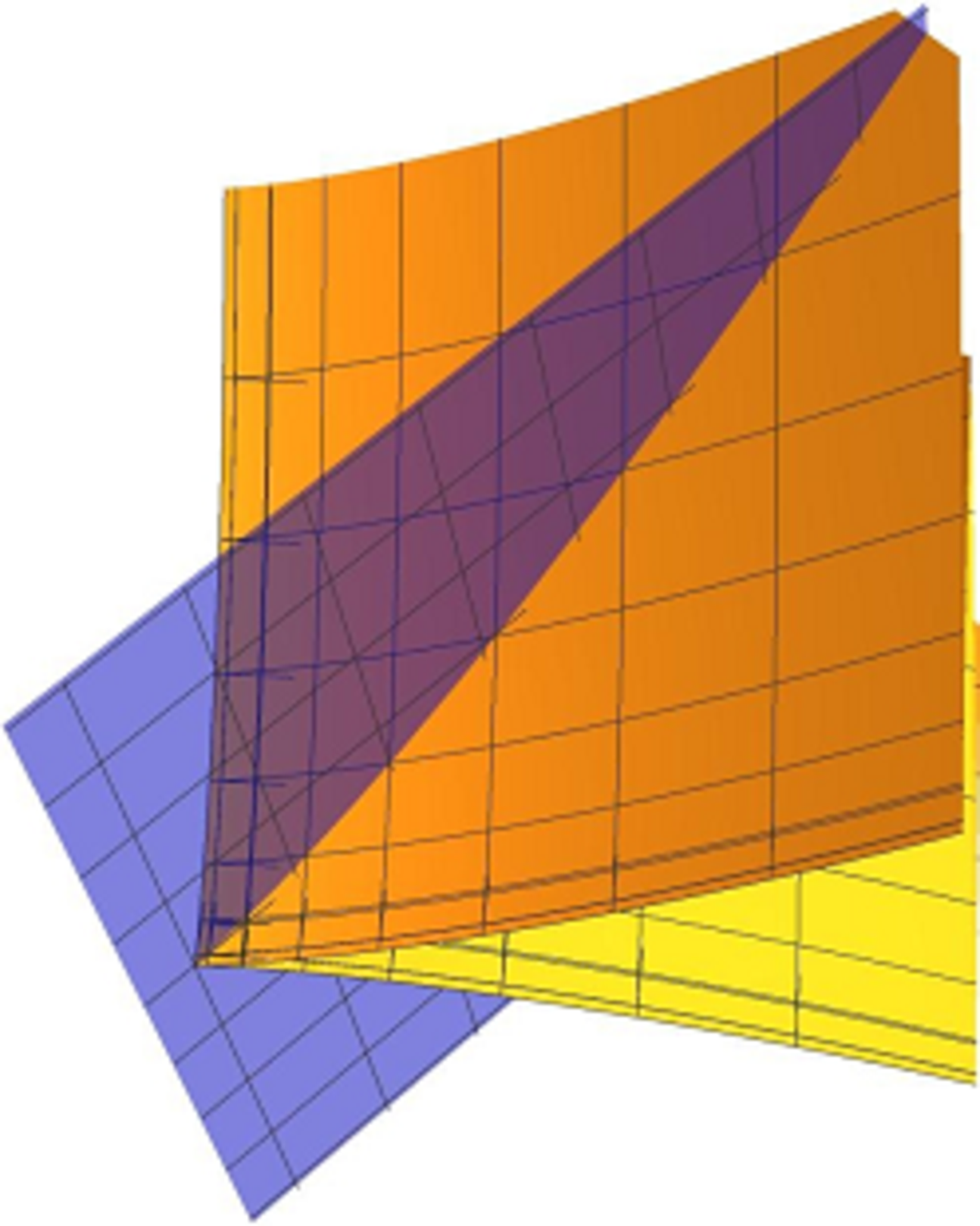}
\end{center}
\caption{The tangent plane, the normal plane and the principal plane}
\label{fig:planes}
\end{figure}
\begin{figure}[ht]
\begin{center}
\includegraphics[width=.3\linewidth]{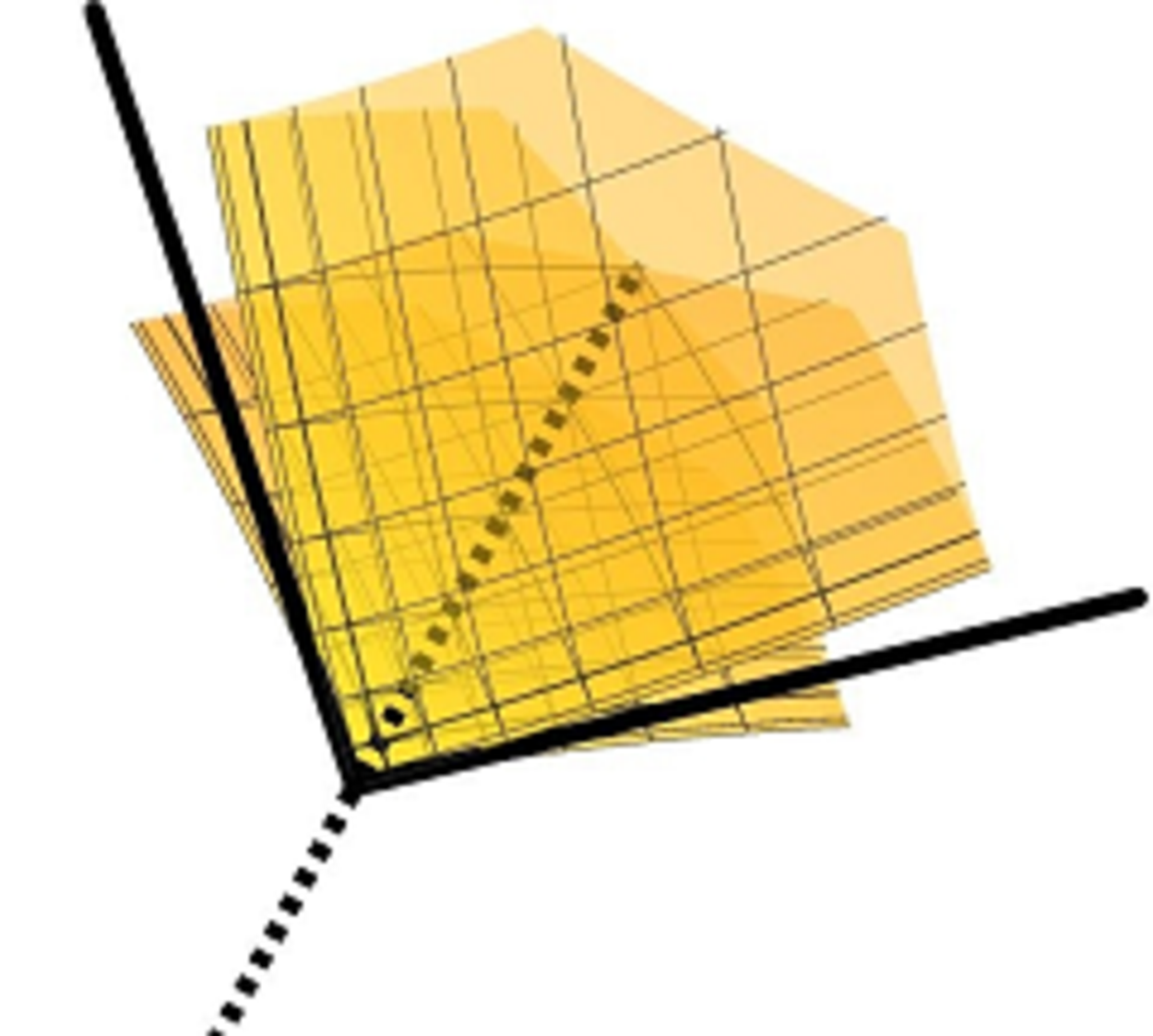}
\end{center}
\caption{The center line (dotted line) and the singular lines (solid lines)}
\label{fig:centerline}
\end{figure}
\begin{theorem}
Let\/ $f\in C^\infty(2,3)$ be a germ of\/ $D_4^+$-singularity.
If there exists\/ $T\in O(2)$ such that\/
$T\circ f(U)=f(U)$ as set germs at\/ $0\in\R^3$ for some open neighborhood
of\/ $0$.
If\/ $T$ is not an identity map, then either
\begin{enumerate}
\item\label{sym:01} a reflection with respect to the tangent plane or,
\item\label{sym:02} a reflection with respect to the principal plane or,
\item\label{sym:03} a\/ $\pi$-rotation with respect to the center line.
\end{enumerate}
If\/ $f\in C^\omega(2,3)$, then
\begin{enumerate}
\setcounter{enumi}{3}
\item\label{sym:11} The above\/ \ref{sym:01} holds 
if and only if\/ $(b(u,v)=b(-u,-v)$, 
$c(u,v)=-c(-u,-v)$,
\item\label{sym:12} The above\/ \ref{sym:02} holds 
if and only if\/ $(b(u,v)=b(v,u)$, 
$c(u,v)=c(v,u))$,
\item\label{sym:13} The above\/ \ref{sym:03} holds 
if and only if\/ $(b(u,v)=b(-v,-u)$, 
$c(u,v)=-c(-v,-u))$.
\end{enumerate}
\end{theorem}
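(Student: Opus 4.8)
The plan is to carry out everything in the normal-form coordinates of \eqref{eq:normal} and to read off the relevant geometric objects explicitly. From \eqref{eq:g1} the singular tangent directions are $\hat\gamma_1''(0)=(1,a,0)$ and, by the analogous expression for $\gamma_2$, $\hat\gamma_2''(0)=(-1,a,0)$; since $a\neq0$ these span the $xy$-plane, so the tangent plane $\nu(0)^\perp$ is the $xy$-plane and $\nu(0)$ points along the $z$-axis. Their bisector, the center line, is the $y$-axis; hence the normal plane is the $xz$-plane and the principal plane is the $yz$-plane. Finally, because $a>0$ and $b=O(3)$, the second component $a(u^2+v^2)+b(u,v)$ is strictly positive for all small $(u,v)\neq0$, so the image lies in the half-space $\{y>0\}$ away from the origin. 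These facts---tangent plane $=xy$-plane, image in $\{y>0\}$, and the named planes and lines---are all that the argument needs.

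First I would settle the classification \ref{sym:01}--\ref{sym:03}. Since $T$ fixes $0$ and maps the image to itself, it preserves the normal line at $0$, hence $T(\nu(0))=\pm\nu(0)$ and $T$ preserves the $xy$-plane; thus $T$ restricts to a linear isometry of that plane and acts on the third coordinate by $z\mapsto\epsilon_3 z$. The key point is that $T$ must also preserve the half-space $\{y>0\}$ containing the image; a linear isometry of the plane fixing the origin and preserving the open upper half-plane is either the identity or the reflection in the $y$-axis, so $T|_{xy}\colon(x,y)\mapsto(\epsilon_1 x,y)$. Therefore $T=\operatorname{diag}(\epsilon_1,1,\epsilon_3)$ with $\epsilon_1,\epsilon_3\in\{\pm1\}$, and if $T$ is not the identity the three remaining sign choices $(\epsilon_1,\epsilon_3)=(1,-1),(-1,1),(-1,-1)$ are precisely the reflection in the tangent plane $\{z=0\}$, the reflection in the principal plane $\{x=0\}$, and the $\pi$-rotation about the center line (the $y$-axis), i.e.\ cases \ref{sym:01}, \ref{sym:02}, \ref{sym:03}.

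For the real-analytic characterizations \ref{sym:11}--\ref{sym:13} I would, in each case, exhibit the source diffeomorphism $S$ with $T\circ f=f\circ S$ and compare the two sides. The implications from the stated relations to the symmetries are immediate substitutions: e.g.\ $b(u,v)=b(-u,-v)$, $c(u,v)=-c(-u,-v)$ give $\operatorname{diag}(1,1,-1)\circ f=f\circ S$ with $S(u,v)=(-u,-v)$, and similarly $S(u,v)=(v,u)$ realizes \ref{sym:02} and $S(u,v)=(-v,-u)$ realizes \ref{sym:03}. For the converse, starting from a symmetry $T$ one obtains $S$ with $T\circ f=f\circ S$ (see the last paragraph), and then determines $S$ jet by jet. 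Matching the quadratic parts of the first two components forces the linear part of $S$ to satisfy $\{(\text{first entry})^2,(\text{second entry})^2\}=\{u^2,v^2\}$, so it is of the form $(\pm u,\pm v)$ in case \ref{sym:01} and $(\pm v,\pm u)$ in cases \ref{sym:02}, \ref{sym:03}; matching the cubic part of the third component, whose leading coefficients are $c_{10},c_{30}>0$, then pins down the signs, giving $S(u,v)=(-u,-v),\,(v,u),\,(-v,-u)$ respectively. The inductive vanishing argument from the proof of Theorem~\ref{thm:unique}---comparing $(k+1)$-st order terms of the first two components to kill the $k$-th order part of $S$---shows $S$ has no higher-order terms, so $S$ equals its linear part exactly. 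Substituting these $S$ into $T\circ f=f\circ S$ and reading off the second and third components yields exactly the relations on $b$ and $c$ in \ref{sym:11}--\ref{sym:13}.

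The main obstacle is the converse direction in the analytic case, namely producing $S$ and proving it is linear. Existence of $S$ uses that $f$ is an injective immersion off its singular set $\{uv=0\}$: there $T\circ f=f\circ S$ defines $S$ unambiguously, and real-analyticity lets one extend $S$ across $\{uv=0\}$ to a germ of diffeomorphism. That $S$ coincides with its linear part is the term-by-term induction borrowed from Theorem~\ref{thm:unique}, where the rigidity of the first two components $u^2-v^2$ and $a(u^2+v^2)$ is essential. Once $S$ is known to be linear, everything else reduces to the routine substitutions recorded above.
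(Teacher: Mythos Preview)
Your overall strategy coincides with the paper's: identify the tangent plane, the center line, and the principal plane in the normal-form coordinates, reduce $T$ to $\operatorname{diag}(\epsilon_1,1,\epsilon_3)$, and for the analytic equivalences pin down the source diffeomorphism $S$ and appeal to the jet-by-jet rigidity argument of Theorem~\ref{thm:unique}. Your half-space argument (the image lies in $\{y\ge 0\}$, so $T$ must fix the positive $y$-direction) is exactly the content of the paper's remark that $T$ preserves the $y$-axis \emph{including its orientation}; this part is fine and arguably more explicit than the paper's one-line reference to the proof of Theorem~\ref{thm:unique}.

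The genuine gap is in how you produce $S$. You claim $f$ is an injective immersion off $\{uv=0\}$ and use this to define $S$ by $T\circ f=f\circ S$; but injectivity fails already for the model map \eqref{eq:normal0}, where $f(u,-u)=f(-u,u)$ for every $u$. So ``$T\circ f=f\circ S$ defines $S$ unambiguously'' is unjustified, and the subsequent analytic-continuation step is not argued either. This is precisely the point at which the paper instead invokes a generalized Zakalyukin lemma (Theorem~A of \cite{zaka}): from $T\circ f(U)=f(U)$ as set germs one obtains a diffeomorphism-germ $\phi$ with $T\circ f\circ\phi=f$, after checking the properness hypothesis (immediate here because the second component of \eqref{eq:normal0} is $u^2+v^2$). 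Once $S=\phi^{-1}$ is in hand, your determination of its linear part via $c_{10},c_{30}>0$ and the induction borrowed from Theorem~\ref{thm:unique} goes through exactly as in the paper. So the fix is to replace your injectivity/analytic-extension paragraph by an appeal to \cite{zaka}, and the rest of your argument stands.
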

Here, $C^\omega(2,3)$ is the set of analytic-germs $(\R^2,0)\to(\R^3,0)$.
\begin{proof}
By \cite[Theorem A]{zaka}, $T\circ f(U)=f(U)$ implies
that there exists a diffeomorphism-germ $\phi:(\R^2,0)\to(\R^2,0)$
such that $T\circ f\circ\phi=f$.
We remark that the assumption of properness in \cite[Theorem A]{zaka}
is obvious since it does not depend on the $\A$-class and
the \eqref{eq:normal0} satisfies the properness.
In fact, since the second component of \eqref{eq:normal0} is $u^2+v^2$,
thus $u^2+v^2\leq R^2$ implies boundedness of $(u,v)$.
Thus if $f:U\to\R^3$ at $p\in U$ for some $U\subset \R^2$
is a $D_4^+$-singularity, then $f$
is $U$-proper in the sense of \cite{zaka}.
We mentioned in the proof of Theorem \ref{thm:unique},
$T\nu(0)=\pm \nu(0)$ and $Tl=l$ holds.
Thus $T$ is either
$T(1,-1)$, $T(-1,1)$ or $T(-1,-1)$, where
$$T(\ep_1,\ep_2)=
\pmt{\ep_1&0&0\\
0&1&0\\
0&0&\ep_2}\quad(\ep_1=\pm1,\ \ep_2=\pm1).$$
This implies the first assertion.
The second assertion is obvious by considering
$(u,v)\mapsto(\pm u,\pm v)$ together with Theorem \ref{thm:unique}.
\end{proof}
\section{Gauss-Bonnet type theorem}
A map $f:M\to \R^3$ 
between $2$-dimensional closed manifold $M$ and $\R^3$
is called 
a {\it frontal\/} (respectively, a {\it front})
if there exists a map $\nu:M\to S^2$ such that
for any $p\in M$, the map-germ $f$ at $p$
is a frontal (respectively, a front) whose unit normal is $\nu$.
We remark that $\nu$ is defined on $M$.
Gauss-Bonnet type theorems for fronts are obtained in
\cite{suyfront, suykyushu, suycoh}, and it is generalized 
to the case of $\partial M\ne\emptyset$ in \cite{dz}, see also \cite{hashi}.
In these theorems, it is assumed that all singularities $p$ of $f$ 
satisfy $\rank df_p=1$.
Here, we show a Gauss-Bonnet type theorem for
fronts with $D_4^+$-singularities.
We follow the proof of the theorem which
is given in \cite[Section 2]{suyfront} and in \cite[Sections 2,3]{suykyushu},
and then we just need to add considerations of 
boundedness of the Gaussian curvature measure, singular curvature measure
and
the interior angles for each
$D_4^+$-singularity to the proof in \cite{suyfront, suykyushu}.
By \eqref{eq:gauss} and \eqref{eq:kappas},
one can easily see
$K\,dA$ and $\kappa_s\,d\tau$ are bounded measures,
where $dA$ is the area form $dA=\sqrt{EG-F^2}\,du\,dv$ for a coordinate
system $(u,v)$ and $\tau$ is the arclength.
Next we discuss the interior angle of two curves emanating from 
a $D_4^+$-singularity.
Let $f:M\to \R^3$ be a front, and let $f$ at $p$ be a $D_4^+$-singularity.
Let $\gamma:([0,\ep),0)\to(\R^2,0)$ 
be a curve.
We set $\gamma(t)=r(t)(\cos\theta(t),\sin\theta(t))$,
where $r(t)=r_1 t+O(2)$.
Then we see
$f(\gamma(t))=t^2r_1^2(\cos2\theta(0),a/2,0)+O(3)$.
This implies that the initial vector of $f(\gamma(t))$
$$
\lim_{t\to0}
\dfrac{f(\gamma(t))'}{|f(\gamma(t))'|}
$$
is well-defined.
Moreover, we can see
all four interior angles between
two adjacent singular curves emanating from $p$
are $\theta$, where $\theta$ is the angle between
two center lines.
Let $\{q_1,\ldots,q_k\}$ be the set of $D_4^+$-singularities of $f$,
and let $\theta(q_i)$ be the inner angle of two tangent 
lines of $q_i$ $(i=1,\ldots,k)$.
Proving the Gauss-Bonnet type theorem,
taking the triangulation of $M$.
By the above arguments, the proof are 
the same except for the inner angles at $D_4^+$-singular point.
In the usual proof, the sum of the interior angles at 
a vertex of the triangles are $2\pi$,
so at the $D_4^+$-singularities,
the difference between $2\pi$ and $4\theta$ appears.
Thus we have the following claim:
For any $p\in S(f)$, 
the germ $f$ at $p$ is a cuspidal edge, a swallowtail, a peak or a
$D_4^+$-singularities, and
$\{q_1,\ldots,q_k\}$ be the set of $D_4^+$-singularities.
Then it holds that
$$
2\pi\chi(M)
+\sum_{i=1}^k(4\theta(q_i)-2\pi)=\int_MK\,dA+2\int_{S(f)}\kappa_s\,ds.
$$
See \cite[Section 2]{suykyushu} for the definition of peaks.
\section{Focal loci}
Let $f\in U\to\R^3$ $(0\in U\subset \R^2)$ be a map, and let
$x=(x_1,x_2,x_3)\in\R^3$ be a point.
A map $D:U\times\R^3\to\R$ defined by $D(u,v,x)=|x-f(u,v)|^2/2$ is
called the {\it distance squared function},
and it is known that this map plays important roles
to investigate differential geometry of surfaces
from the viewpoint of singularity theory.
See \cite{bg,fh,irrt} for example.
We set $d_x(u,v)=D(u,v,x)$ as a function of two variables.
The {\it focal locus\/} of $f$ at $0$ is the set
$$
\{x\in\R^3\,|\,(d_x)_u=(d_x)_v=\det\hess d_x=0 \text{ at } (u,v)=(0,0)\}.
$$
Focal locus of a map with rank one is well-studied, in particular,
in \cite{fh},
an important notion focal conic is introduced.
In this section, we study that of $f$ given in
the right-hand side of \eqref{eq:normal}
with $a>0$, $c_{10}>0$ and $c_{30}>0$,
which is rank zero.
We see $((d_x)_u,(d_x)_v)(0,0)=(0,0)$,
and
$$
\hess(d_x)(0,0)=\pmt{-2 (x_1+a x_2)&0\\
0&2 (x_1-a x_2)}.
$$
Thus the focal locus of $d_x$ is
intersecting two planes
$
FL=\{x\,|\,x_1=-ax_2\}\cup\{x\,|\,x_1=ax_2\}.
$
Here, we give more precise division of $FL$
according to the singularity of $d_x$ ($x\in FL$).
See \cite[Part II]{agv} for the names and the determinators of
singularities to be dealt with here.
We set the coefficient matrix $C_{m,n}$ of a function 
$h(u,v)=\sum_{i,j=0}^{i=m,j=n} a_{ij} u^iv^j$ at $(0,0)$
by
$$
C_{m,n}(h)=\pmt{
a_{0n}&a_{1n}&\cdots&a_{mn}\\
\vdots&\vdots&\ddots&\vdots\\
a_{00}&a_{10}&\cdots&a_{m0}}.
$$
Let us set $x_2\ne0$ and $x_1=-a x_2$.
Then by a direct calculation, we see
$$
C_{2,3}(d_x)=
\pmt{-2 a x_2&\\
0&0\\
 &0&0&-x_2 b_1(0)-x_3 c_1(0)},
$$
where the blank slots are not necessary for the later calculation.
Then we see $d_x$ at $(u,v)=(0,0)$ is 
$\A$-equivalent to $u^2+v^3$ (the $A_2$-singularity) if and 
only if $x_1=-ax_2$,
$x_3\ne -x_2 b_1(0)/ c_1(0)$ and $x_2\ne0$.
Let us assume $x_1=-ax_2$,
$x_3= -x_2 b_1(0)/ c_1(0)$ and $x_2\ne0$.
Then we have
$$
C_{2,4}(d_x)=
\pmt{-2 a x_2&\\
0&0&0\\
 &0&0&0&\dfrac{1+a^2}{2}+x_2\dfrac{b_1(0) c_1'(0)-b_1'(0)c_1(0)}{c_1(0)}}.
$$
This implies 
$d_x$ is $\A$-equivalent to $u^2\pm v^4$ (the $A_3$-singularity) 
under the above condition if and only if
\begin{equation}\label{eq:x2100}
2\big(c_1(0)b_1'(0)-b_1(0)c_1'(0)\big)x_2\ne
(1+a^2)c_1(0)
\end{equation}
Let us assume $x_1=-ax_2$,
$x_3= -x_2 b_1(0)/ c_1(0)$, $c_1(0)b_1'(0)-b_1(0)c_1'(0)\ne0$ and
$x_2=(1+a^2)c_1(0)/\big(c_1(0)b_1'(0)-b_1(0)c_1'(0)\big)$.
Then
\begin{align*}
C_{2,5}(d_x)&=
\pmt{-2 a x_2&\\
0&0&0\\
 &0&0&0&0&A},\\
A&=\left.
\dfrac{
(1+a^2)(b_1 c_1''-c_1 b_1'')-4 a b_1(b_1 c_1'-c_1 b_1')
}{4(c_1b_1'-b_1c_1')}\right|_{(u,v)=(0,0)}.
\end{align*}
This implies 
$d_x$ is $\A$-equivalent to $u^2+ v^5$ (the $A_4$-singularity) 
under the above condition if and only if
$A\ne0$.
Let us set $x_2\ne0$ and $x_1=a x_2$.
By the same calculation, 
we see the conditions that $d_x$ is 
$\A$-equivalent to the $A_k$-singularity $(k=2,3,4)$
are the same
as the above conditions changing $(b_1,c_1)$ to $(b_3,c_3)$.
Let us set $x_1=x_2=0$.
Then by
$$
C_{3,3}(d_x)=
\pmt{-x_3 c_3(0)\\
0&0\\
0&0&0\\
 &0&0&-x_3 c_1(0)},
$$
we see $d_x$ is $\A$-equivalent to $u^3+ v^3$ (the $D_4^+$-singularity of function)
if $x_3\ne0$.
Let us set $x_1=x_2=x_3=0$.
Then by
$$
C_{4,4}(d_x)=
\pmt{
\dfrac{1+a^2}{2}\\
0&0\\
0&0&-1+a^2\\
0&0&0&0\\
 &0&0&0&\dfrac{1+a^2}{2}},
$$
and the fact that
$2(-1 + a^2)/(1 + a^2)$ never equal to $\pm2$,
we see $d_x$ is $\A$-equivalent to $u^4+ \alpha u^2v^2+v^4$ $(\alpha\ne\pm2)$
($X_9$-singularity).
\begin{example}\label{ex:focal}
Let us set
$$
f(u,v)=
(u^2 - v^2, 
 2 (u^2 + v^2) + u^3 + v^3 (1 + 4 v), 
 u^3 (1 +  u) + v^3 (1 + v)).
$$
Then if $x=(x_1,x_2,x_3)$ satisfies
\begin{itemize}
\item
$x_1=\pm 2 x_2$, $x_3\ne -x_2$ and $x_2\ne0$,
then $d_x$ at $(0,0)$ is the $A_2$-singularity,
\item
$x_1=-2 x_2$, $x_3=-x_2$ and $x_2\ne 5/6$
then $d_x$ at $(0,0)$ is the $A_3$-singularity,
\item
$x_1=-2 x_2$, $x_3=-x_2$ and $x_2= 5/6$
then $d_x$ at $(0,0)$ is the $A_4$-singularity,
\item
$x_1=x_2=0$ and $x_3\ne0$,
then $d_x$ at $(0,0)$ is the $D_4^+$-singularity, and
\item
$x_1=x_2=x_3=0$,
then $d_x$ at $(0,0)$ is the $X_9$-singularity.
\end{itemize}
See Figure \ref{fig:focal} for the configurations of focal set
and the image of $f$.
\begin{figure}[ht]
\begin{center}
\includegraphics[width=.5\linewidth]{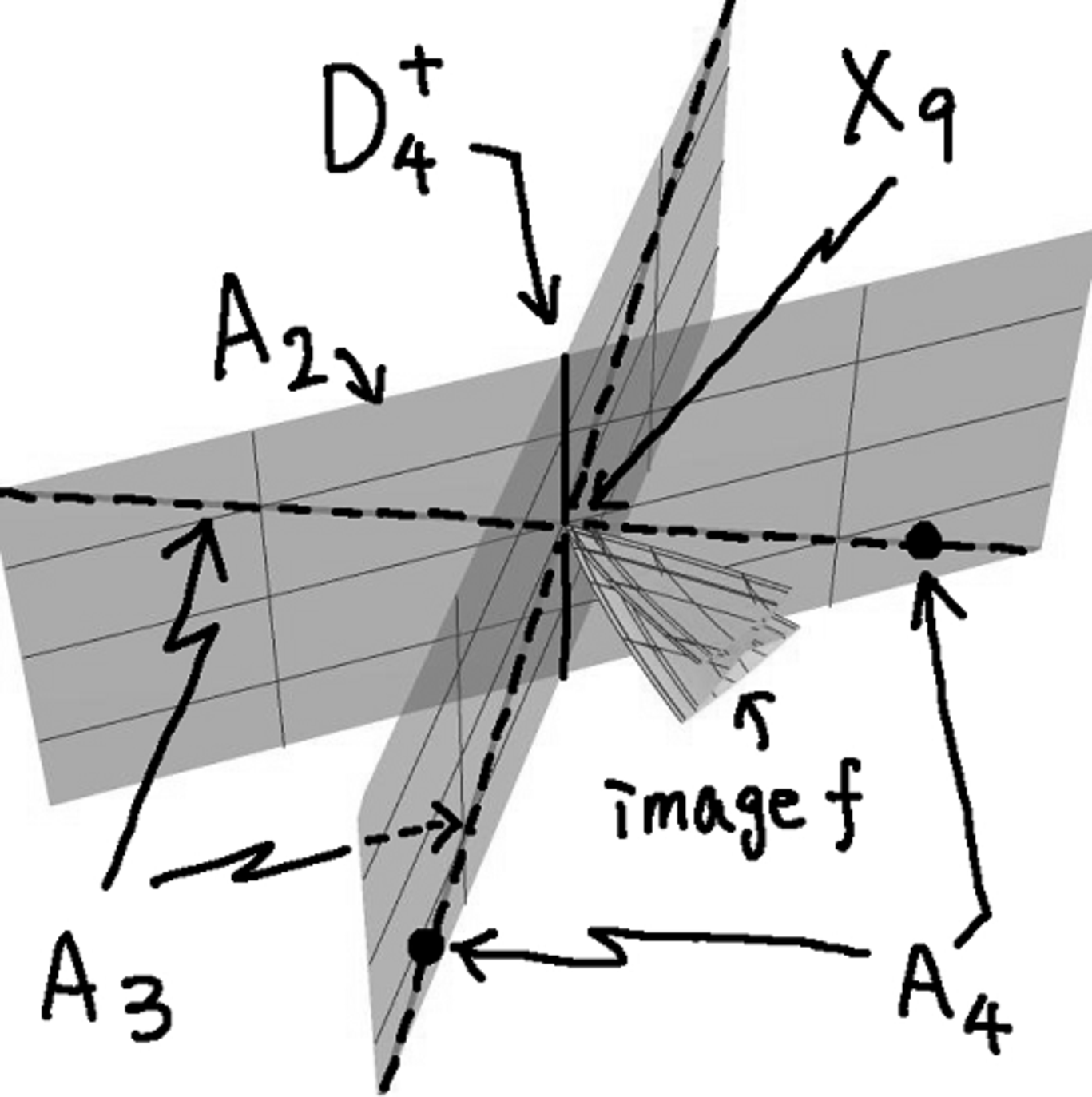}
\end{center}
\caption{Focal set of $f$ in Example \ref{ex:focal}}
\label{fig:focal}
\end{figure}
\end{example}


\medskip
{\footnotesize
\begin{flushright}
\begin{tabular}{l}
Department of Mathematics,\\
Graduate School of Science, \\
Kobe University, \\
Rokkodai 1-1, Nada, Kobe \\
657-8501, Japan\\
E-mail: {\tt saji@math.kobe-u.ac.jp}
\end{tabular}
\end{flushright}}

\end{document}